\documentclass{amsart} 

\usepackage{amsmath,amsfonts,amssymb,mathrsfs,amsthm}

\usepackage[dvipsnames]{xcolor}

\usepackage{tikz}
\usepackage{datetime}


\newtheorem{theorem}{Theorem}[section]
\newtheorem{lemma}[theorem]{Lemma}
\newtheorem{proposition}[theorem]{Proposition}
\newtheorem{corollary}[theorem]{Corollary}

\theoremstyle{definition}
\newtheorem{definition}[theorem]{Definition}

\theoremstyle{remark}

\numberwithin{equation}{section}


\newcommand{\g}{\mathfrak{g}}

\newcommand{\B}{\mathcal{B}}

\newcommand{\E}{\mathcal{E}}
\newcommand{\F}{\mathcal{F}}

\newcommand{\wt}{\mathrm{wt}}

\newcommand\Hom{\mathrm{Hom}}

\usepackage{mathtools}
\DeclarePairedDelimiter{\ceiling}{\lceil}{\rceil}
\DeclarePairedDelimiter{\floor}{\lfloor}{\rfloor}

\begin{document}


\title[Extremal tensor products]{Extremal tensor products of Demazure crystals}  

\author[S. Assaf]{Sami Assaf}
\address{Department of Mathematics, University of Southern California}
\email{shassaf@usc.edu}
\thanks{S.A. supported by Simons Award 953878.}

\author[A. Dranowski]{Anne Dranowski}
\address{Department of Mathematics, University of Southern California}
\email{dranowsk@usc.edu}

\author[N. Gonz\'alez]{Nicolle Gonz\'alez}
\address{Department of Mathematics,  University of California, Berkeley}
\email{nicolle@math.berkeley.edu}

\subjclass[2020]{Primary 05E10; Secondary 05E16, 20G42}



\keywords{Crystal bases, Demazure crystals, Demazure modules, extremal crystals, tensor products, excellent filtrations.}

\begin{abstract}
Demazure crystals are subcrystals of highest weight irreducible $\g$-crystals. In this article, we study tensor products of a larger class of subcrystals, called extremal, and give a local characterization for exactly when the tensor product of Demazure crystals is extremal. We then show that tensor products of Demazure crystals decompose into direct sums of Demazure crystals if and only if the tensor product is extremal, thus providing a sufficient and necessary local criterion for when the tensor product of Demazure crystals is itself Demazure. As an application, we show that the primary component in the tensor square of any Demazure crystal is always Demazure. 
\end{abstract}

\maketitle

%
\section{Introduction}
%
\label{sec:introduction}

For $G$ a connected, simply-connected, semi-simple Lie group, the finite dimensional simple $G$-modules $V(\lambda)$ are indexed by $\lambda\in P^+$ the set of dominant weights.
For $w$ an element of the Weyl group $W$ and $B \subset G$ a Borel subgroup, the \emph{Demazure module} $V_w(\lambda)$ is the $B$-submodule of $V(\lambda)$ generated by the one-dimensional extremal weight space of $V_w(\lambda)$ with weight $w \lambda$ \cite{Dem74}.

Mathieu \cite{Mat89} proved a conjecture of Polo \cite{Pol89} stating that if one twists a Demazure module by an anti-dominant character, then the resulting $B$-module can be filtered with successive quotients given by Demazure modules. 
Such a filtration is called an \emph{excellent filtration}. Excellent filtrations for $B$-modules generalize the notion of \emph{good filtrations} for $G$-modules, where successive quotients are Weyl modules. We identify simple modules and their Weyl modules. The tensor product of two $G$-modules with good filtrations has a good filtration, but having excellent filtrations is not, in general, preserved by tensor products for $B$-modules \cite{vdK89}.

Kashiwara \cite{Kas91} introduced crystal bases as set-theoretic abstractions of Lusztig's geometric canonical bases \cite{Lus90} of representations of quantized universal enveloping algebras, at $q = 0$.
In particular, Weyl modules admit crystals bases and we denote the crystal of $V(\lambda)$ by $\B(\lambda)$. 

Crystals carry some of the structure of their modules. For example, the character of a module can be computed from its crystal.
Moreover, there is a simple combinatorial rule for defining the tensor product crystal $\B(\lambda) \otimes \B(\mu)$ as a disjoint union of crystals which exactly matches the representations appearing in the decomposition of $V(\lambda) \otimes V(\mu)$. 
Note this gives a crystal-theoretic proof that tensor products of modules with good filtrations once again have good filtrations.

The Demazure character formula \cite{Dem74a} was proved by Joseph \cite{Jos85} and generalized by Ishii \cite{Ish15}. For $w\in W$, Littelmann \cite{Lit95} conjectured, and proved for classical types, the existence of a subset $\B_w(\lambda) \subseteq \B(\lambda)$ whose character corresponds to that of the Demazure module $V_w(\lambda)$. Kashiwara \cite{Kas93} extended the definition of Demazure crystals and gave an explicit construction for $\B_w(\lambda)$.

For $u,v\in W$ and $\mu,\nu\in P^+$, the subset $\B_u(\mu) \otimes \B_v(\nu)$ of $\B(\mu) \otimes \B(\nu)$ is not, in general, a direct sum of Demazure crystals. In part, this reflects the fact that the tensor product module $V_{u}(\mu) \otimes V_{v}(\nu)$ does not, in general, admit an excellent filtration. 
On the other hand, this reflects the fact that the tensor product crystal
$\B_u(\mu) \otimes \B_v(\nu)$ is not, in general, the crystal of $V_{u}(\mu) \otimes V_{v}(\nu)$. 

Lakshmibai, Littelmann and Magyar \cite{LLM02} and independently Joseph \cite{Jos03} proved $\B_{e}(\mu) \otimes \B_v(\nu)$ is a direct sum of Demazure crystals, giving a combinatorial version of Mathieu's result \cite{Mat89}.
There are other special cases of tensors of Demazure crystals studied using Kirillov--Reshetikhin crystals \cite{Naoi13, Naoi12,LS19,Gib21}.
Kouno \cite{Kou20} recently gave a complete characterization for when $\B_u(\mu) \otimes \B_v(\nu)$ is a direct sum of Demazure crystals. This characterization, however, considers certain combinatorial properties that depend on knowing exactly which Weyl group elements and dominant weights index the Demazure crystals being tensored.

In this paper, we replace Kouno's global condition with a local criterion that does not require knowledge of the combinatorial properties above and hence is more generally applicable. More specifically, we study a broader class of subsets of crystals which we call \emph{extremal}. These are subsets $X\subseteq \B(\lambda)$ characterized by the so-call \emph{string property}: for any $i$-string $S \subset \B(\lambda)$ with highest weight element $b$, the intersection $S \cap X$ is either $\varnothing$, $\{b\}$, or $S$. Kashiwara \cite{Kas93} notes extremality is one of the remarkable properties enjoyed by the Demazure crystals $\B_w(\lambda)$. Thus, while all Demazure crystals are extremal, not all extremal subsets are Demazure.

As with the Demazure property, the extremal property is not preserved by tensor products. Our first main result is a local characterization for when the tensor product of extremal subsets is extremal. Here the $e_i$ and $f_i$ are the raising and lowering operators, respectively, on the crystals.

\begin{theorem}\label{thm:extreme-hinge}
  For $X\subset \B(\lambda)$ and $Y\subset\B(\mu)$ extremal subsets, the tensor product $X \otimes Y$ is an extremal subset of $\B(\lambda) \otimes \B(\mu)$ if and only if for every $x \otimes y \in X \otimes Y$ for which $e_i(x\otimes y) = e_i(x) \otimes y \neq 0$ and $f_i(x\otimes y) = x \otimes f_i(y) \neq 0$, we have $f_i(y) \in Y$.
\end{theorem}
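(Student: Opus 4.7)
The plan is to first give a working reformulation of extremality and then identify the structure of $i$-strings in the tensor product. The subset $X$ is extremal exactly when, for every $x \in X$ and every index $i$ with $e_i(x) \neq 0$, the entire $i$-string through $x$ lies in $X$. Using the tensor product rules (with the convention that $e_i$ acts on the left factor of $x \otimes y$ iff $\varphi_i(x) \geq \varepsilon_i(y)$, and $f_i$ acts on the right factor iff $\varphi_i(x) \leq \varepsilon_i(y)$), one sees that every $i$-string in $\B(\lambda)\otimes\B(\mu)$ has a unique ``bend'' element $x_b \otimes y_b$ with $\varphi_i(x_b) = \varepsilon_i(y_b)$: above the bend it consists of elements $e_i^m(x_b) \otimes y_b$, and below the bend it consists of elements $x_b \otimes f_i^k(y_b)$. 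The hypothesis of the theorem then says exactly that $x \otimes y$ is a bend element at which $e_i$ on the left and $f_i$ on the right are both nonzero.

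For the forward direction I will simply apply extremality of $X \otimes Y$. If $x \otimes y$ satisfies the hypotheses, then $e_i(x \otimes y) \neq 0$ shows $x \otimes y$ is not the top of its $i$-string, so extremality puts the entire string into $X \otimes Y$; in particular $x \otimes f_i(y) = f_i(x \otimes y) \in X \otimes Y$, which forces $f_i(y) \in Y$.

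For the backward direction I will fix $x \otimes y \in X \otimes Y$ with $e_i(x \otimes y) \neq 0$ and verify that the entire $i$-string through it lies in $X \otimes Y$, splitting on the sign of $\varphi_i(x) - \varepsilon_i(y)$. When $\varphi_i(x) \geq \varepsilon_i(y)$, the nonvanishing of $e_i$ forces $\varepsilon_i(x) > 0$, so extremality of $X$ places the full $i$-string of $x$ in $X$; this accounts for every string element from the top down through the bend $f_i^{k_0}(x) \otimes y$, where $k_0 = \varphi_i(x) - \varepsilon_i(y)$. The bend lies in $X \otimes Y$ and has $e_i$ nonzero on its left factor, so whenever $\varphi_i(y) > 0$ the local condition applies at the bend and yields $f_i(y) \in Y$; extremality of $Y$ applied at $f_i(y)$ (using $e_i(f_i(y)) = y \neq 0$) then propagates to $f_i^j(y) \in Y$ for all $j$, covering the elements below the bend. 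The case $\varphi_i(x) < \varepsilon_i(y)$ is symmetric and in fact does not need the local condition: here $e_i$ acts on the right so $e_i(y) \neq 0$, extremality of $Y$ supplies the full $i$-string of $y$, and extremality of $X$ (used only when $\varepsilon_i(x) > 0$) supplies the remaining elements above the bend.

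The main obstacle will be the bookkeeping in the case split: verifying that the bend element is in $X \otimes Y$ before invoking the local condition, that $e_i$ is indeed nonzero on its left factor at the bend (which follows from $\varepsilon_i(f_i^{k_0}(x)) = \varepsilon_i(x) + k_0$ being strictly positive), and that the conclusion $f_i(y) \in Y$ genuinely spreads to the whole lower segment via a separate application of extremality of $Y$. Once this bookkeeping is in place, each case reduces to a single use of extremality of one factor together with at most one application of the local condition.
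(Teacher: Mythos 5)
Your proof is correct. Both directions check out: the reformulation of extremality (``if $x\in X$ and $e_i(x)\neq 0$ then the whole $i$-string of $x$ lies in $X$'') is equivalent to Definition~\ref{def:extremal-crystal}, your description of $i$-strings in $\B(\lambda)\otimes\B(\mu)$ as having a unique bend with $\varphi_i(x_b)=\varepsilon_i(y_b)$ is accurate (the top of any string necessarily satisfies $\varepsilon_i(y_0)\le\varphi_i(x_0)$, and the difference $\varphi_i(x)-\varepsilon_i(y)$ decreases strictly through zero as one descends), and the case analysis in the converse covers everything, including the degenerate cases where the bend is the top or bottom of the string.

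The core idea is the same as the paper's --- the only possible obstruction to extremality sits at an element where $e_i$ acts on the left factor and $f_i$ acts on the right --- but your execution of the harder direction differs in two respects. First, the paper factors the theorem through an intermediate result (Theorem~\ref{thm:badguy}) phrased in terms of \emph{broken $i$-hinges}, where a hinge is required to satisfy the stronger conditions $\varphi_i(x)=0=\varepsilon_i(y)$; it then proves the converse by contrapositive, running a minimality argument down an offending $i$-string to locate a broken hinge and deducing $\varphi_i(x)=\varepsilon_i(y)=0$ from the extremality of $Y$. You instead prove the converse directly, verifying string-by-string containment via the bend decomposition, and you apply the theorem's local condition at a bend where $\varphi_i(x_b)=\varepsilon_i(y_b)$ may be positive (which is harmless: when that common value is positive, $\varepsilon_i(y)>0$ and extremality of $Y$ already forces $f_i(y)\in Y$, so the condition is only genuinely restrictive at hinges in the paper's sense). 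What your route buys is a self-contained structural statement --- every $i$-string in a tensor product is a left segment, a bend, and a right segment --- that makes the containment argument transparent and avoids the minimal-counterexample bookkeeping; what the paper's route buys is the sharper normalized notion of hinge ($\varphi_i(x)=\varepsilon_i(y)=0$), which it reuses verbatim in the proofs of Theorem~\ref{thm:extreme-demazure} and Lemma~\ref{lem:diagonal}. Either organization is sound.
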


Any tensor product of Demazure crystals which decomposes as a direct sum of Demazure crystals is also extremal, since Demazure crystals are extremal. Our second main result is that, remarkably, the converse is true as well.

\begin{theorem}\label{thm:extreme-demazure}
  For $\lambda,\mu\in P^+$ and $w,u\in W$, we have $\B_w(\lambda) \otimes \B_u(\mu)$ is a direct sum of Demazure crystals if and only if it is extremal.
\end{theorem}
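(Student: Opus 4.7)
The reverse implication is immediate: each Demazure crystal is extremal by Kashiwara \cite{Kas93}, and a disjoint union of extremal subsets lying in distinct connected components of $\B(\lambda) \otimes \B(\mu)$ remains extremal, since every $i$-string lies within a single connected component.

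For the forward implication, assume $D := \B_w(\lambda) \otimes \B_u(\mu)$ is extremal. My strategy is to decompose $\B(\lambda) \otimes \B(\mu) = \bigsqcup_\nu \B(\nu)^{\oplus c_{\lambda \mu}^\nu}$ into irreducible components and show each intersection $D_\nu := D \cap \B(\nu)$ is either empty or a Demazure crystal $\B_{v_\nu}(\nu)$. Extremality of $D$ descends componentwise to $D_\nu$, and since $\B_w(\lambda)$ and $\B_u(\mu)$ are each closed under all $e_i$, so is $D$; iterating $e_i$ on any element of $D_\nu$ thus stays inside $D_\nu$ and terminates at the highest weight element $u_\nu$, forcing $u_\nu \in D_\nu$ whenever $D_\nu$ is nonempty. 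I would then proceed by induction on $\ell(w)$, with base case $w = e$ handled by the Lakshmibai--Littelmann--Magyar/Joseph theorem \cite{LLM02,Jos03}, which gives a Demazure decomposition of $\{u_\lambda\} \otimes \B_u(\mu)$ with no extremality assumption needed.

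For the inductive step, write $w = s_i w'$ with $\ell(w') = \ell(w) - 1$ and use the Kashiwara recursion $\B_w(\lambda) = \bigcup_{b \in \B_{w'}(\lambda)}\{f_i^k b : 0 \leq k \leq \varphi_i(b)\}$ to express $D$ as the $f_i$-closure on the first factor of $\B_{w'}(\lambda) \otimes \B_u(\mu)$. The local condition supplied by Theorem~\ref{thm:extreme-hinge}---that at each ``seam'' $x \otimes y \in D$ with $e_i(x \otimes y) = e_i(x) \otimes y \neq 0$ and $f_i(x \otimes y) = x \otimes f_i(y) \neq 0$ one has $f_i(y) \in \B_u(\mu)$---is the key combinatorial input ensuring that this $f_i$-closure extends each Demazure component from the inductive hypothesis into a Demazure component of $D$, rather than merging or splitting components in a non-Demazure fashion. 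The principal obstacle is that extremality does not descend to subsets: even though $D$ is extremal, the subset $\B_{w'}(\lambda) \otimes \B_u(\mu)$ may fail to be extremal, so the inductive hypothesis cannot be invoked directly. I would circumvent this by working componentwise inside each irreducible $\B(\nu)$, applying the inductive Demazure identification to the pieces $D_\nu$ and using the local condition globally to glue them into a coherent Demazure decomposition of $D$.
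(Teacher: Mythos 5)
Your reverse implication is fine. The forward implication, however, is a strategy outline rather than a proof: the entire difficulty of the theorem is concentrated in the inductive step, and you assert rather than establish it. Concretely, you claim that the no-broken-hinge condition ``ensures that this $f_i$-closure extends each Demazure component from the inductive hypothesis into a Demazure component of $D$,'' but this is precisely what must be proved. Applying $\F_i$ to the first tensor factor is not the same as applying $\F_i$ to the tensor product ($f_i(x)\otimes y$ need not equal $f_i(x\otimes y)$), so one must show, under the hinge hypothesis, that $\F_i(\B_{w'}(\lambda))\otimes\B_u(\mu)$ coincides with $\F_i\bigl(\B_{w'}(\lambda)\otimes\B_u(\mu)\bigr)$ and then that $\F_i$ of a Demazure crystal $\B_{v_\nu}(\nu)$ is again Demazure (which requires knowing $s_i v_\nu \succ v_\nu$ or $\F_i$ acting trivially). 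The paper's Proposition~\ref{prop:excellent-tensor} carries out exactly this kind of argument in the much easier special case $Y=\B(\mu)$, and even there it takes real work; in the general case no argument is given. Moreover, your proposed fix for the obstacle you correctly identify --- that extremality of $\B_w(\lambda)\otimes\B_u(\mu)$ need not pass to $\B_{w'}(\lambda)\otimes\B_u(\mu)$ --- does not work as stated: ``working componentwise inside each $\B(\nu)$'' cannot rescue an inductive hypothesis whose premise (extremality of the smaller product) has not been verified. That particular obstacle is in fact surmountable, but by a different observation you do not make: a broken $i$-hinge $x\otimes y$ of $\B_{w'}(\lambda)\otimes\B_u(\mu)$ is literally a broken $i$-hinge of $\B_w(\lambda)\otimes\B_u(\mu)$, since the hinge conditions depend only on $x$, $y$, and whether $f_i(y)\in\B_u(\mu)$, and the second factor is unchanged; so Theorem~\ref{thm:badguy} gives the descent of extremality. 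Even with that repaired, the inductive step above remains open in your write-up.

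For comparison, the paper does not attempt this induction at all. It proves that $\B_w(\lambda)\otimes\B_u(\mu)$ is extremal if and only if $\floor{w}^{\lambda}\in W_{\ceiling{u}^{\mu}}$ (by exhibiting or excluding a broken hinge), and then invokes Kouno's Theorem~\ref{thm:kouno}, which characterizes the Demazure decomposition by the same Bruhat-theoretic condition. Your route, if completed, would be genuinely different and arguably stronger --- a self-contained crystal-theoretic proof bypassing Kouno's Lakshmibai--Seshadri path machinery --- but as written the key step is missing.
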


Theorems~\ref{thm:extreme-hinge} and \ref{thm:extreme-demazure} give a new, local characterization to determine when a tensor product retains its Demazure structure. We apply this characterization to show the connected component of $\B_w(\lambda)^{\otimes m}$ containing $b_{\lambda} \otimes \cdots \otimes b_{\lambda}$ is a direct sum of Demazure crystals, even when the full product is not.

We conclude with some evidence suggesting the existence of a modified definition of the tensor product rule for crystals which preserves the extremal property. Such a rule might lead to a characterization for which tensor products of Demazure modules admit excellent filtrations.

%
\section{Normal crystals}
%
\label{sec:normal-crystals}

Let $\g$ be the complex reductive Lie algebra associated with the Lie group $G$. Let $I$ be the set of vertices of the Dynkin diagram of $\g$, and let $P$ be the weight lattice of $\g$. For each $i\in I$, we have the simple root $\alpha_i\in P$ and its coroot $\alpha^{\vee}\in P^\vee= \Hom_{\mathbb{Z}}(P,\mathbb{Z})$. We review Kashiwara's theory of $\g$-crystal, restricting to the category of highest weight crystals. For further details, see \cite{Kas94}. 

A (normal) \emph{$\g$-crystal} $\B$ consists of a (finite) set $\B$ together with maps
\[
  \wt : \B \rightarrow P, \hspace{1em}
  \varepsilon_i,\varphi_i : \B \rightarrow \mathbb{Z}, \hspace{1em}
  e_i,f_i  : \B \rightarrow \B \sqcup \{0\}  
\]
subject to the following axioms for all $i\in I$ and all $b,b'\in \B$,
\begin{enumerate}
\item[(C1)] $\varphi_i(b) - \varepsilon_i(b) = \langle \alpha^{\vee}_i , \wt(b) \rangle$; 
\item[(C2)] if $e_i(b)\in \B$, then $\wt(e_i(b)) = \wt(b) + \alpha_i$;\\
  if $f_i(b)\in \B$, then $\wt(f_i(b)) = \wt(b) - \alpha_i$;
\item[(C3)] $b' = e_i(b)$ if and only if $b = f_i(b')$;
\item[(C4)] $\varepsilon_i(b) = \max\{k\ge 0 \mid e_i^k(b)\in \B\}$, and
  $\varphi_i(b) = \max\{k\ge 0 \mid f_i^k(b)\in \B\}$.
\end{enumerate}
The \emph{crystal operators} $\{e_i,f_i\}_{i\in I}$ are the $q=0$ limits of the Chevalley generators
of the quantum group $U_q(\g)$.

Let $P^+ = \{ \lambda\in P \mid \langle \alpha^{\vee}_i,\lambda \rangle \ge 0 \ \forall i\}$ be the set of \emph{dominant weights}. This set naturally indexes the finite dimensional irreducible integrable $U_q(\g)$-modules. For $\lambda\in P^{+}$, let $\B(\lambda)$ be the normal crystal associated with the crystal base of the simple $U_q(\g)$-module with highest weight $\lambda$.

Following \cite{Jos03} we introduce the monoids $\E$ and $\F$ generated by $\{e_i\}_{i\in I}$ and $\{f_i\}_{i\in I}$, respectively. 
For $b_{\lambda}$ the unique element of $\B(\lambda)$ with weight $\lambda$, we have
\[\E \{b_{\lambda}\} = 0, \mbox{ and }
  \F \{b_{\lambda}\} = \B(\lambda) \sqcup \{0\}.\]
In general, we say an element $b\in \B$ is \emph{highest weight} if $\E \{b\}=0$. 

A connected crystal $\B$ is a \emph{highest weight crystal with highest weight $\lambda$} if there exists a highest weight element $b_{\lambda}\in\B$ of weight $\lambda$ such that $\B = \F \{b_{\lambda}\}$.

Given two $\g$-crystals $\B,\B'$, a \emph{morphism} from $\B$ to $\B'$ is a map which commutes with the structure maps. Note not every highest weight crystal with highest weight $\lambda$ is isomorphic to $\B(\lambda)$. Henceforth, we consider only those $\g$-crystals $\B$ for which every connected component is isomorphic to $\B(\lambda)$ for some $\lambda\in P^+$.

%
\section{Demazure crystals}
%
\label{sec:excellent-crystals}

Let $W$ denote the Weyl group of $\g$ which, as a Coxeter group, is equipped with a \emph{length function} $\ell: W \rightarrow \mathbb{N}$ and \emph{Bruhat} partial order given by transitive closure of relations $u \prec u t$ for $t$ a reflection with $\ell(u) \le \ell(ut)$. Equivalently, $u\prec w$ if and only if a(ny) reduced expression for $w$ contains as a subword a reduced expression for $u$. We refer the reader to \cite{BB05} for classical results on Coxeter groups.

Littelmann \cite{Lit95} conjectured, and proved for classical types, the existence of a subset $\B_w(\lambda) \subseteq \B(\lambda)$ for any $w\in W$ whose character equals that of the Demazure module $V_w(\lambda)$ computed by Demazure's character formula \cite{Dem74,Jos85}. Kashiwara \cite{Kas93} generalized Littelmann's construction as follows.

For $w\in W$ and $s_{i_1} \cdots s_{i_{\ell}}$ a reduced expression for $w$, define
\begin{equation} \label{eq:F-definition}
  \F_w = \bigcup_{m_i \in \mathbb{N}} f_{i_1}^{m_1} f_{i_2}^{m_2} \cdots f_{i_{\ell}}^{m_{\ell}} \subset \F ,
\end{equation}
and define $\E_w$ similarly. 

Given any $X \subseteq B(\lambda)$ and $w \in W$, the set $\F_w(X)$ is independent of the choice of reduced expression for $w$ \cite{Jos03}. Hence, the following construction is well-defined.

\begin{definition}[\cite{Kas93}] \label{def:Dem}
  For $\lambda\in P^+$ and $w\in W$, the \emph{Demazure crystal}  $\B_w(\lambda)$ is
  \begin{equation} \label{eq:Dem-definition}
    \B_w(\lambda) = \F_{w} \{ b_\lambda \} .
  \end{equation}
\end{definition}
Notice that while $B_w(\lambda)$ is closed under $\E$ it is not closed under $\F$.

The following is a restatement of the fact that $\B_w(\lambda)$ is well-defined.

\begin{proposition}[\cite{Kas93}]  \label{prop:kas}

  For $\lambda\in P^+$ and $w\in W$, if $s_{i_1} \cdots s_{i_{\ell}}$ is any reduced expression for $w$, then for any $b \in \B_w(\lambda)$, there exist $m_j \ge 0$ such that
  \[ b = f_{i_1}^{m_1} f_{i_2}^{m_2} \cdots f_{i_{\ell}}^{m_{\ell}} (b_{\lambda}). \]
 \end{proposition}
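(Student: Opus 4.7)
The plan is to derive the proposition as an essentially immediate consequence of the well-definedness statement invoked just before Definition~\ref{def:Dem}. That statement, attributed to Joseph~\cite{Jos03}, asserts that the subset $\F_w(X) \subseteq \B(\lambda)$ does not depend on the choice of reduced expression used to build $\F_w$ in~\eqref{eq:F-definition}. Combined with the definition $\B_w(\lambda) = \F_w\{b_\lambda\}$ from~\eqref{eq:Dem-definition}, this independence already carries all the content we need.

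Concretely, the argument I would write goes as follows. Fix any reduced expression $s_{i_1} \cdots s_{i_\ell}$ for $w$ and use it to construct $\F_w$ as in~\eqref{eq:F-definition}. The independence result ensures $\F_w\{b_\lambda\} = \B_w(\lambda)$ regardless of the reduced expression chosen to define $\F_w$. Unpacking the union appearing in~\eqref{eq:F-definition}, any $b \in \B_w(\lambda)$ must lie in the image of a single monomial $f_{i_1}^{m_1} f_{i_2}^{m_2} \cdots f_{i_\ell}^{m_\ell}$ applied to $b_\lambda$, which is exactly the form claimed, with all $m_j \ge 0$. The proposition is then a matter of recording this observation.

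The only substantive step hidden behind this short argument is the independence of $\F_w(X)$ from the choice of reduced expression. The approach to proving that would be induction on $\ell(w)$: use Matsumoto's theorem to reduce any two reduced expressions to a chain of braid moves, and then verify closure of $\F_w(X)$ under each such move in the relevant rank-two parabolic subgroup (the commuting case, together with types $A_2$, $B_2$, and $G_2$ depending on $\g$). Each rank-two check amounts to a direct computation with the crystal operators $f_i, f_j$ on strings, governed by axioms (C1)--(C4). Since the proposition treats this independence as an input, no such computation is required here, and the main obstacle that would arise in a from-scratch proof, namely the $G_2$ braid verification, can be bypassed by simply invoking~\cite{Jos03}.
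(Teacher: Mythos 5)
Your proposal is correct and matches the paper exactly: the paper offers no separate proof, remarking only that Proposition~\ref{prop:kas} ``is a restatement of the fact that $\B_w(\lambda)$ is well-defined,'' i.e.\ of the independence of $\F_w\{b_\lambda\}$ from the chosen reduced expression, which is precisely the deduction you make. Your closing sketch of how one would prove that independence (braid moves via Matsumoto's theorem) is a reasonable gloss on the cited input but, as you note, is not needed here.
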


The following converse will be useful in identifying Demazure crystals.

\begin{proposition}\label{prop:subword}
  For $\lambda\in P^+$, if $b = f_{i_1}^{m_1} f_{i_2}^{m_2} \cdots f_{i_{k}}^{m_{k}} (b_{\lambda})$ with $m_j \ge 0$ and $k$ minimal, then $s_{i_1} \cdots s_{i_{k}}$ is a reduced expression for some $w \in W$.
\end{proposition}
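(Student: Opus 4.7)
The plan is to argue by contradiction using the \emph{Demazure product} on $W$. Recall that this product $\ast$ is defined by $s_i \ast v = s_i v$ when $\ell(s_i v) > \ell(v)$ and $s_i \ast v = v$ otherwise, then extended associatively. A standard Coxeter-theoretic fact is that $\ell(s_{j_1} \ast \cdots \ast s_{j_n}) \leq n$ with equality if and only if $s_{j_1} \cdots s_{j_n}$ is a reduced expression.

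The first step is to establish the identity
\[ \F_i \B_v(\lambda) = \B_{s_i \ast v}(\lambda) \qquad \text{for all } v \in W, \ i \in I. \qquad (\ast) \]
When $\ell(s_i v) > \ell(v)$, concatenating $s_i$ with any reduced expression for $v$ produces a reduced expression for $s_i v$, so $(\ast)$ follows immediately from Definition~\ref{def:Dem}. When $\ell(s_i v) < \ell(v)$, the element $v$ admits a reduced expression beginning with $s_i$; writing $\B_v(\lambda) = \F_i \F_{j_2} \cdots \F_{j_\ell}\{b_\lambda\}$ and using the idempotence $\F_i \F_i = \F_i$ shows $\F_i \B_v(\lambda) = \B_v(\lambda) = \B_{s_i \ast v}(\lambda)$.

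Setting $w := s_{i_1} \ast s_{i_2} \ast \cdots \ast s_{i_k}$ and iterating $(\ast)$ from $\B_e(\lambda) = \{b_\lambda\}$ then yields
\[ b \;\in\; \F_{i_1} \F_{i_2} \cdots \F_{i_k}\{b_\lambda\} \;=\; \B_w(\lambda). \]
By Proposition~\ref{prop:kas}, fixing any reduced expression $s_{j_1} \cdots s_{j_{\ell(w)}}$ for $w$ produces nonnegative integers $n_1,\ldots,n_{\ell(w)}$ with $b = f_{j_1}^{n_1} \cdots f_{j_{\ell(w)}}^{n_{\ell(w)}}(b_\lambda)$. The minimality hypothesis on $k$ therefore forces $\ell(w) \geq k$, so $\ell(w) = k$ and the expression $s_{i_1} \cdots s_{i_k}$ is reduced, as claimed.

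The main obstacle is the identity $(\ast)$, specifically the case $\ell(s_i v) < \ell(v)$ where one must show that $\B_v(\lambda)$ is closed under $f_i$. This rests on the independence of $\F_w X$ from the choice of reduced expression (noted just before Definition~\ref{def:Dem}), which lets one select a reduced word for $v$ starting with $s_i$ whenever such a word exists. Once $(\ast)$ is in hand, the contradiction is immediate from Proposition~\ref{prop:kas}.
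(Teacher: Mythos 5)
Your proof is correct, but it is organized quite differently from the paper's. The paper argues by induction on $k$: at the inductive step it assumes $s_{i_1}\cdots s_{i_k}$ is reduced for some $u$ and derives a contradiction with minimality if $s_iu \prec u$, by using Proposition~\ref{prop:kas} to rewrite $b_0$ along a reduced word for $u$ beginning with $s_i$ and then absorbing the extra $f_i^m$. You instead isolate the closure identity $\F_i\B_v(\lambda) = \B_{s_i\ast v}(\lambda)$ (essentially Kashiwara's property (D2) in monoid form, valid for the same reason the paper's rewriting step works: independence of $\F_wX$ from the reduced expression plus idempotence of $\F_i$), conclude $b\in\B_w(\lambda)$ for $w$ the Demazure product of the word, and then invoke minimality exactly once at the end via the standard fact that $\ell(s_{i_1}\ast\cdots\ast s_{i_k})\le k$ with equality if and only if the word is reduced. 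Both arguments ultimately rest on Proposition~\ref{prop:kas}; yours buys a clean, reusable identity and a one-line finish, while the paper's stays entirely elementary and avoids introducing the Demazure product. One small point to make explicit if you write this up: the ``standard fact'' that $\ell(w)=k$ forces the word to be reduced should be justified (each factor in the iterated $\ast$ raises the length by at most one, and raising it by exactly one at every step means the Demazure product coincides with the ordinary product of a reduced word), but this is routine.
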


\begin{proof}
  We proceed by induction on $k$, noting the base case $k=1$ is trivial since $s_i$ is always reduced. Now assume the result for $k$, and let $b\in\B(\lambda)$ be written
  \[ b = f_{i}^{m} f_{i_1}^{m_1} \cdots f_{i_{k}}^{m_{k}} (b_{\lambda}) \]
  with $k+1$ minimal. Set $b_0 = f_{i_1}^{m_1} \cdots f_{i_{k}}^{m_{k}} (b_{\lambda})$. Note $k$ is minimal among all such expansions, since otherwise the expansion for $b$ could be shortened as well. Therefore, by induction, $s_{i_1} \cdots s_{i_{k}}$ is a reduced expression for some $u \in W$. In particular, $b_0 \in \F_{u}\{b_{\lambda}\}$ and $\ell(u)=k$.

  We claim $u \prec s_{i} u$. If not, then $s_i u \prec u$. Thus there exists a reduced expression for $u$, say $s_i s_{j_2} \cdots s_{j_{k}}$, and, by Proposition~\ref{prop:kas}, $b_0 = f_{i}^{n} f_{j_2}^{n_2} \cdots f_{j_{k}}^{n_{k}} (b_{\lambda})$. However, applying $f_{i}^{m}$ then gives a strictly shorter expansion
  \[ b = f_{i}^{m} (b_0) = f_{i}^{m+n} f_{j_2}^{n_2} \cdots f_{j_{k}}^{n_{k}} (b_{\lambda}), \]
  contradicting the minimality of $k+1$. Thus $u \prec s_{i} u$ as claimed. In particular, $s_i s_{i_1} \cdots s_{i_{k}}$ is a reduced expression for $s_i u$.  
\end{proof}

For $\lambda \in P^+$, let $W_\lambda$ be the stabilizer subgroup of $\lambda$ in $W$, and denote by $\floor{w}^\lambda$ and $\ceiling{w}^\lambda$ the minimal and maximal length coset representatives of $wW_\lambda$, respectively.
Note $\B_w(\lambda) = \B_{\floor{w}^\lambda}(\lambda) = \B_{\ceiling{w}^\lambda}(\lambda)$. 

\begin{proposition}\label{prop:minimal}
  Let $v,\, w\in W$ and $\lambda\in P^+$. Then $\B_v(\lambda) \subseteq \B_w(\lambda)$ if and only if $\floor{v}^\lambda\preceq w$ if and only if $v \preceq \ceiling{w}^\lambda$. In particular, $\B_v(\lambda) = \B_w(\lambda)$ only when $v \in w W_\lambda$.
\end{proposition}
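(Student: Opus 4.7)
My plan is to cycle through the three equivalences with different techniques: two ``easy'' subword implications, a parabolic coset argument linking the two Bruhat conditions, and the main inductive step extracting $\floor{v}^\lambda\preceq w$ from $\B_v(\lambda)\subseteq\B_w(\lambda)$ by tracking the extremal element $b_{v\lambda}$.

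The subword implications $\floor{v}^\lambda\preceq w\Rightarrow\B_v(\lambda)\subseteq\B_w(\lambda)$ and $v\preceq\ceiling{w}^\lambda\Rightarrow\B_v(\lambda)\subseteq\B_w(\lambda)$ will be immediate from~\eqref{eq:F-definition}: a reduced subword witnessing the Bruhat relation yields the corresponding inclusion of $\F$-monoids, and the noted equalities $\B_v(\lambda)=\B_{\floor{v}^\lambda}(\lambda)$ and $\B_{\ceiling{w}^\lambda}(\lambda)=\B_w(\lambda)$ finish the job. To link the two Bruhat conditions, I will use the parabolic factorizations $\ceiling{w}^\lambda=\floor{w}^\lambda\cdot w_\lambda$ (with $w_\lambda$ the longest element of $W_\lambda$) and $v=\floor{v}^\lambda\cdot y$ with $y\in W_\lambda$, both with lengths adding, combined with the standard Coxeter fact that $u\preceq x\Leftrightarrow u\preceq\floor{x}^\lambda$ whenever $u\in W^\lambda$ (see \cite{BB05}). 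Since $y\preceq w_\lambda$ holds automatically, concatenating subword witnesses yields $\floor{v}^\lambda\preceq w\Rightarrow v\preceq\ceiling{w}^\lambda$; the converse follows by applying $\floor{\cdot}^\lambda$ and the same standard fact.

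The substantive step is $\B_v(\lambda)\subseteq\B_w(\lambda)\Rightarrow\floor{v}^\lambda\preceq w$. I will track the extremal element $b_{v\lambda}\in\B_v(\lambda)\subseteq\B_w(\lambda)$; since $b_{v\lambda}=b_{\floor{v}^\lambda\lambda}$, it suffices to prove the inductive claim: if $v=\floor{v}^\lambda$ is a minimal coset representative and $b_{v\lambda}\in\B_u(\lambda)$, then $v\preceq u$. The induction is on $\ell(u)$. The base $u=e$ forces $b_{v\lambda}=b_\lambda$, so $v\in W_\lambda$ and hence $v=e$. For $\ell(u)\geq 1$, I will write $u=s_{i_1}u'$ with $\ell(u')=\ell(u)-1$, factor $\B_u(\lambda)=\bigcup_{m\geq 0}f_{i_1}^m\B_{u'}(\lambda)$ from~\eqref{eq:F-definition}, and write $b_{v\lambda}=f_{i_1}^m(b')$ with $b'\in\B_{u'}(\lambda)$. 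If $m=0$ then the inductive hypothesis gives $v\preceq u'\preceq u$. If $m\geq 1$, extremality of $b_{v\lambda}$ (which forces $\varepsilon_{i_1}\varphi_{i_1}=0$) pins it to the bottom of its $i_1$-string, whose top is the extremal element $b_{s_{i_1}v\lambda}$; this top lies in $\B_{u'}(\lambda)$ because Demazure crystals are closed under $\E$. A short check shows $s_{i_1}v$ is itself a minimal coset representative (otherwise $v$ would not be), so induction yields $s_{i_1}v\preceq u'$, and the Bruhat lifting property then promotes this to $v\preceq s_{i_1}u'=u$. The main obstacle will be precisely this $m\geq 1$ case, which requires combining extremality of $b_{v\lambda}$, $\E$-closure of $\B_{u'}(\lambda)$, minimality of $s_{i_1}v$ in its coset, and the Bruhat lifting property. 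The final clause $v\in wW_\lambda$ will follow by antisymmetry: $\B_v(\lambda)=\B_w(\lambda)$ gives $\floor{v}^\lambda\preceq w$ and $\floor{w}^\lambda\preceq v$, which through the coset link force $\floor{v}^\lambda=\floor{w}^\lambda$.
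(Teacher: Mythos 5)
Your proposal is correct, and its overall skeleton matches the paper's: the easy inclusions come from the subword property applied to $\F_{\floor{v}^\lambda}\subseteq\F_w$ (the paper cites \cite[Prop.~3.2.4]{Kas93} for this), and the converse is extracted by tracking the extremal element $b_{v\lambda}=b_{\floor{v}^\lambda\lambda}$. The difference is in how much is proved versus cited. The paper's converse is a two-line argument: the weight $\floor{v}^\lambda(\lambda)$ occurs in $\B_{\ceiling{w}^\lambda}(\lambda)$, ``and so'' $\floor{v}^\lambda\preceq\floor{w}^\lambda$ --- the implication being a known fact about which extremal weights appear in a Demazure crystal, left uncited and unproved. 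You instead prove exactly this fact from Definition~\ref{def:Dem} by induction on $\ell(u)$: factoring $\B_u(\lambda)=\F_{i_1}\bigl(\B_{u'}(\lambda)\bigr)$, using that an extremal element satisfies $\varepsilon_{i_1}\varphi_{i_1}=0$ so that in the nontrivial case the top of its $i_1$-string is $b_{s_{i_1}v\lambda}\in\B_{u'}(\lambda)$ by $\E$-closure, checking $s_{i_1}v$ is again a minimal coset representative (your check via $v\alpha_j=\alpha_{i_1}$ forcing $\langle\alpha_{i_1}^\vee,v\lambda\rangle=0$ is the right one), and finishing with the lifting property. You also make explicit the purely Coxeter-theoretic equivalence $\floor{v}^\lambda\preceq w\Leftrightarrow v\preceq\ceiling{w}^\lambda$ via the length-additive factorizations through $W_\lambda$, which the paper gets simultaneously from the same weight argument. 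The net effect is a longer but self-contained proof that replaces an external appeal with an argument running entirely inside the crystal formalism already set up in Sections 3--4; both are valid.
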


\begin{proof} 
  If $\floor{v}^\lambda\preceq w$, then by \cite[Prop.~3.2.4]{Kas93}, we have $\B_v(\lambda) = \B_{\floor{v}^\lambda}(\lambda) \subseteq \B_w(\lambda)$. Similarly, if $v \preceq \ceiling{w}^\lambda$, the same result implies $\B_v(\lambda) \subseteq \B_w(\lambda) = \B_{\ceiling{w}^{\lambda}}(\lambda)$.

  Suppose $\B_v(\lambda) \subseteq \B_w(\lambda)$. Then $\floor{v}^\lambda(\lambda)$ is a weight that occurs in $\B_{\floor{v}^\lambda}(\lambda) = \B_v(\lambda) \subseteq \B_w(\lambda) = \B_{\ceiling{w}^{\lambda}}(\lambda)$, and so $\floor{v}^\lambda \preceq \floor{w}^\lambda \preceq w$ and $v \preceq \ceiling{v}^{\lambda} \preceq \ceiling{w}^{\lambda}$.
\end{proof}

%
\section{Extremal crystals}
%
\label{sec:extremal-crystals}

Kashiwara \cite{Kas93} showed $\B_w(\lambda)$ satisfies the following properties.
\begin{enumerate}
\item[(D1)] $\E \left( \B_w(\lambda) \right) \subset \B_w(\lambda) \sqcup \{0\}$;
\item[(D2)] if $s_i w \prec w$, then $\B_w(\lambda) = \{ f_{i}^{m} (b) \mid m \ge 0, b\in \B_{s_iw}(\lambda), e_i(b)=0\}\setminus\{0\}$; 
\item[(D3)] for any $i$-string $S$, $S \cap \B_w(\lambda)$ is either $\varnothing$ or $S$ or $\{b\}$, where $e_i(b)=0$.
\end{enumerate}
Here an \emph{$i$-string} is a connected subset of a crystal closed under both $\E_i$ and $\F_i$, where these denote the monoids generated by $e_i$ and $f_i$, respectively. 

Joseph \cite{Jos03} considered subsets of $\B(\lambda)$ satisfying (D1) and (D3). Following recent work of the extremal authors \cite{AG21}, we refer to such subsets as extremal. 

\begin{definition}
  A subset $X \subseteq \B(\lambda)$ is \emph{extremal} if $X$ is nonempty and for any $i$-string $S$ of $\B(\lambda)$, $S \cap X$ is either $\varnothing$ or $S$ or $\{b\}$, where $e_i(b)=0$.
  \label{def:extremal-crystal}
\end{definition}

Notice for $X$ extremal, we have $\E X \subset X \sqcup \{0\}$. In particular, since $X$ is nonempty, it must contain a highest weight element of $\B$. However, not all extremal subsets are Demazure crystals. For example, take $\g = A_2$ and $\lambda = \omega_1 + \omega_2$. Then $X = \{b_{\lambda}, f_1(b_{\lambda}), f_2(b_{\lambda})\}$ is extremal, but not Demazure.

%
\section{Tensor products of normal crystals}
%
\label{sec:tensor}

The \emph{direct sum} $\B_1 \oplus \B_2$ of two $\g$-crystals is their disjoint union with the obvious maps. Thus every crystal decomposes as a direct sum of highest weight crystals.

The \emph{tensor product} $\B_1 \otimes \B_2$ is the set $\{ b_1 \otimes b_2 \mid b_1\in\B_1 \mbox{ and } b_2\in\B_2 \}$ with maps
\begin{align*}
  \wt(b_1 \otimes b_2) & = \wt(b_1) + \wt(b_2), \\
  \varepsilon_i(b_1 \otimes b_2) & = \max( \varepsilon_i(b_1), \varepsilon_i(b_2) - \wt_i(b_1) ),\\
  \varphi_i(b_1 \otimes b_2) & = \max( \varphi_i(b_2), \varphi_i(b_1) + \wt_i(b_2) ),
\end{align*}
where $\wt_i(b) = \langle \alpha^{\vee}_i , \wt(b) \rangle$. The crystal operators $e_i, f_i$ are defined by
\begin{align*}
  e_i(b_1 \otimes b_2) &=
  \begin{cases}
    e_i(b_1) \otimes b_2 & \mbox{if } \varepsilon_i(b_2) \le \varphi_i(b_1), \\
    b_1 \otimes e_i(b_2) & \mbox{if } \varepsilon_i(b_2) > \varphi_i(b_1);
  \end{cases}\\
  f_i(b_1 \otimes b_2) &= 
  \begin{cases}
    f_i(b_1) \otimes b_2 & \mbox{if } \varepsilon_i(b_2) < \varphi_i(b_1), \\
    b_1 \otimes f_i(b_2) & \mbox{if } \varepsilon_i(b_2) \geq \varphi_i(b_1).
  \end{cases}
\end{align*}
The tensor product is associative but not commutative, though it is functorial.

\begin{theorem}[\cite{Kas91}]
  For $\lambda,\mu\in P^+$, $\B(\lambda) \otimes \B(\mu)$ is a crystal for $V(\lambda)\otimes V(\mu)$.
  \label{thm:func}
\end{theorem}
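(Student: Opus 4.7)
The plan is to separate the assertion into two parts: first, that the tensor product rule endows $\B(\lambda) \otimes \B(\mu)$ with the structure of an abstract crystal (axioms (C1)--(C4)); and second, that this combinatorial crystal is isomorphic to the crystal base of $V(\lambda) \otimes V(\mu)$ as a $U_q(\g)$-module.

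For the first part, the verification is routine case analysis. Axiom (C1) is obtained by splitting on whether $\varepsilon_i(b_2) \le \varphi_i(b_1)$ and computing $\varphi_i(b_1 \otimes b_2) - \varepsilon_i(b_1 \otimes b_2)$: in both regimes, the inner maxes collapse so that (C1) applied factor-wise yields $\wt_i(b_1) + \wt_i(b_2)$. Axiom (C2) is immediate from the piecewise formulas for $e_i, f_i$ together with (C2) for each factor. For (C3), the delicate point is the strict-versus-weak inequality: one must check that if $f_i$ acts in the first factor (requiring $\varepsilon_i(b_2) < \varphi_i(b_1)$), then after applying $f_i$ we remain in the regime where $e_i$ subsequently acts in the first factor (requiring $\varepsilon_i(b_2) \le \varphi_i(f_i(b_1))$), and symmetrically on the second factor. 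Axiom (C4) then follows from (C2) and (C3) together with finiteness of $\B(\lambda)$ and $\B(\mu)$.

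For the second part, I would invoke Kashiwara's fundamental result on compatibility of crystal bases with tensor products. Concretely, one takes the crystal lattice $L(\lambda) \otimes L(\mu) \subset V(\lambda) \otimes V(\mu)$ and the induced basis $B(\lambda) \otimes B(\mu)$ in the quotient by $q L$, and shows that Kashiwara's operators $\tilde e_i, \tilde f_i$ on the tensor module, defined intrinsically via the quantum group through the coproduct, act on these basis vectors precisely by the combinatorial rule above. This identification is the deep content, established in \cite{Kas91} by the so-called grand loop induction, which simultaneously proves existence of crystal bases for integrable $U_q(\g)$-modules and their stability under tensor products.

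The main obstacle is precisely this second part. The first step is essentially bookkeeping, whereas the identification of the combinatorial rule with the genuine crystal base of the tensor module requires a subtle inductive argument involving the polarization form and careful tracking of how $\tilde e_i$ and $\tilde f_i$ commute with the coproduct modulo higher powers of $q$. Once this identification is in place, the theorem reduces to the already-verified fact that $\B(\lambda) \otimes \B(\mu)$ is an abstract crystal, and the fact that the decomposition of $V(\lambda)\otimes V(\mu)$ into highest-weight summands is mirrored by the decomposition of the tensor crystal into its connected components.
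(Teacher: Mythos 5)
The paper offers no proof of this statement at all: it is quoted verbatim from \cite{Kas91} as background, so there is no argument of the paper's to compare against. Your outline is an accurate description of how the result is actually established in the literature: the verification of (C1)--(C4) for the tensor product rule is indeed routine case analysis (and you correctly flag the one delicate point, that $\varphi_i(f_i(b_1)) = \varphi_i(b_1)-1$ keeps the strict inequality $\varepsilon_i(b_2) < \varphi_i(b_1)$ consistent with the weak inequality needed for $e_i$ to undo $f_i$ in the same factor), while the identification of this combinatorial structure with the genuine crystal base of $V(\lambda)\otimes V(\mu)$ is the content of Kashiwara's grand loop induction. Be aware, however, that your second step is not an independent argument: ``invoking Kashiwara's result on compatibility of crystal bases with tensor products'' is essentially a restatement of the theorem being proved, so what you have is a correct reduction of the statement to its one genuinely hard ingredient, together with an honest acknowledgment of where that ingredient lives --- which is no less than the paper itself provides.
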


%
\section{Tensor products of Demazure crystals}
%
\label{sec:tensor-dem}

Given Demazure crystals $\B_w(\lambda),\B_u(\mu)$, it is not the case that $\B_w(\lambda)\otimes\B_u(\mu)$ is always a direct sum of Demazure crystals. For example, when $\g = A_2$, the product $\B_{s_2}(\omega_2)\otimes\B_{s_1}(\omega_1)$ is not direct sum of Demazure crystals, which is expected since the module $V_{s_2}(\omega_2)\otimes V_{s_1}(\omega_1)$ does not admit an excellent filtration. 

The issue here is more fundamental, however, as there is no analog of Theorem~\ref{thm:func} for Demazure crystals. For example, take $\g = A_2$ with $\lambda = \omega_1 + \omega_2$ and $w = s_1 s_2$. Then $V_{w}(\lambda)\otimes V_{w}(\lambda)$ admits an excellent filtration, but $\B_{w}(\lambda)\otimes \B_{w}(\lambda)$ is not a direct sum of Demazure crystals and so is not a crystal for $V_{w}(\lambda)\otimes V_{w}(\lambda)$. 

Kouno \cite{Kou20} characterized when $\B_u(\mu) \otimes \B_v(\nu)$ is a direct sum of Demazure crystals. For any $\sigma \in W$ let  $W_\sigma \subseteq W$ denote the parabolic subgroup. Note, despite similar notation, $W_\sigma$ is not related to the stabilizer subgroup $W_\lambda$ for $\lambda \in P^+$.
\[W_\sigma = \langle \; s_i \in W \; | \; s_i \sigma \prec \sigma \; \rangle. \]
  
\begin{theorem}[\cite{Kou20}] \label{thm:kouno}
  Let $\lambda, \mu \in P^+$ and $u, w \in W$. Then $B_w(\lambda) \otimes B_u(\mu)$ is a direct sum of Demazure crystals if and only if $\floor{w}^\lambda \in W_{\ceiling{u}^\mu}$.
\end{theorem}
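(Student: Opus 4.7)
The plan is to derive Kouno's characterization from the authors' Theorems~\ref{thm:extreme-hinge} and \ref{thm:extreme-demazure}. Since $\B_w(\lambda) = \B_{\floor{w}^\lambda}(\lambda)$ and $\B_u(\mu) = \B_{\ceiling{u}^\mu}(\mu)$, I would first reduce to the case $w = \floor{w}^\lambda$ and $u = \ceiling{u}^\mu$, and show that $\B_w(\lambda)\otimes\B_u(\mu)$ is a direct sum of Demazure crystals if and only if $w \in W_u$. By Theorem~\ref{thm:extreme-demazure}, this is equivalent to characterizing when the tensor is extremal.

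Next I would apply the hinge criterion of Theorem~\ref{thm:extreme-hinge}. Unpacking the tensor product rule, the hypotheses $e_i(x\otimes y) = e_i(x)\otimes y \neq 0$ and $f_i(x\otimes y) = x\otimes f_i(y) \neq 0$ translate to $\varepsilon_i(y) = \varphi_i(x)$ together with $\varepsilon_i(x) > 0$ and $\varphi_i(y) > 0$. If $\varepsilon_i(y) > 0$, then $y$ is not the top of its $i$-string, so property (D3) applied to $\B_u(\mu)$ forces the entire $i$-string through $y$ into $\B_u(\mu)$, and $f_i(y)\in\B_u(\mu)$ holds automatically. Hence the criterion reduces to the boundary case $\varepsilon_i(y) = 0 = \varphi_i(x)$: whenever $x$ sits at the bottom of a nontrivial $i$-string contained in $\B_w(\lambda)$ and $y$ is the highest weight element of a nontrivial $i$-string in $\B_u(\mu)$, we must have $f_i(y)\in\B_u(\mu)$.

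I would then translate this boundary condition into Bruhat combinatorics. For $u = \ceiling{u}^\mu$, property (D2) shows that $\B_u(\mu)$ fails the desired closure for some highest weight $y$ of a nontrivial $i$-string precisely when $s_i u \not\prec u$, i.e.\ when $s_i \notin W_u$. For $w = \floor{w}^\lambda$, an induction on $\ell(w)$ using (D2) and Proposition~\ref{prop:subword} identifies those $i$ for which $\B_w(\lambda)$ contains a nontrivial $i$-string as exactly those with $s_i \preceq w$, i.e.\ those in the support of $w$. Combining, the hinge criterion holds if and only if the support of $w$ is contained in the generating set of the parabolic $W_u$, which by standard Coxeter theory is equivalent to $w \in W_u$.

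The main obstacle is the constructive half of this translation: given $s_i \notin W_u$, one must produce an explicit highest weight $y \in \B_u(\mu)$ of a nontrivial $i$-string with $f_i(y) \notin \B_u(\mu)$, and given $s_i \preceq w$, one must produce an $x \in \B_w(\lambda)$ at the bottom of a nontrivial $i$-string so that both problematic elements actually interact in the tensor. Each construction relies on delicate use of the minimal/maximal coset conventions (as in Proposition~\ref{prop:minimal}) and on choosing reduced expressions that ``expose'' the reflection $s_i$, since the failures must be realized by genuine pairs $x \otimes y$ rather than merely reflect the absence of a forcing mechanism.
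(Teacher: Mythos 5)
Your proposal is circular. Theorem~\ref{thm:kouno} is not proved in this paper at all --- it is imported from Kouno's work, where it is established by a technical argument with Lakshmibai--Seshadri paths. The paper's own Theorem~\ref{thm:extreme-demazure}, which you invoke as your first step, is obtained by \emph{combining} Kouno's theorem with the paper's separate result that $\B_w(\lambda)\otimes\B_u(\mu)$ is extremal if and only if $\floor{w}^\lambda\in W_{\ceiling{u}^\mu}$ (the unnamed theorem of Section~\ref{sec:main}). So when you write ``By Theorem~\ref{thm:extreme-demazure}, this is equivalent to characterizing when the tensor is extremal,'' you are assuming the statement you set out to prove.

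The remainder of your argument --- translating the no-broken-hinge condition into the Bruhat condition $\floor{w}^\lambda\in W_{\ceiling{u}^\mu}$ via (D2), (D3), and Proposition~\ref{prop:subword} --- is essentially sound and closely parallels the paper's Section~\ref{sec:main} proof of the extremality characterization (including the constructions of a witness $y$ with $f_i(y)\notin\B_u(\mu)$ when $s_i\ceiling{u}^\mu\succ\ceiling{u}^\mu$, and a witness $x$ with $\varepsilon_i(x)>0$, $\varphi_i(x)=0$ when $s_i\preceq\floor{w}^\lambda$). But that only yields ``extremal $\Leftrightarrow$ $\floor{w}^\lambda\in W_{\ceiling{u}^\mu}$.'' The genuinely hard content of Kouno's theorem is the implication that this condition forces an actual decomposition of $\B_w(\lambda)\otimes\B_u(\mu)$ into Demazure crystals (extremality of a subset does not by itself imply it is Demazure, as the paper's $A_2$ example with $X=\{b_\lambda,f_1(b_\lambda),f_2(b_\lambda)\}$ shows). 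Your proposal supplies no independent argument for that implication, so the gap is not a technical detail but the entire substance of the theorem.
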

 
Kouno's proof is quite technical, making detailed use Lakshmibai-Seshadri paths for crystals \cite{LLM02, Lit95-2}. One of Kouno's main applications is to the key positivity problem, where he notes in \cite[Thm~8.2]{Kou20} the following special case of Theorem~\ref{thm:kouno} for which we give a direct proof.

\begin{proposition}
  For $\lambda,\mu\in P^+$ and $w\in W$, we have $\B_w(\lambda) \otimes \B(\mu)$ is a direct sum of Demazure crystals.
  \label{prop:excellent-tensor}
\end{proposition}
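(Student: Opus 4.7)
The strategy is to deduce this as a near-immediate corollary of the two main theorems. By Theorem~\ref{thm:extreme-demazure}, the statement that $\B_w(\lambda) \otimes \B(\mu)$ is a direct sum of Demazure crystals is equivalent to the assertion that it is extremal, so the entire task reduces to verifying extremality.

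To apply Theorem~\ref{thm:extreme-hinge} with $X = \B_w(\lambda)$ and $Y = \B(\mu)$, we need both factors to be extremal subsets. Extremality of $X$ is property (D3) at the start of Section~\ref{sec:extremal-crystals}, established by Kashiwara. Extremality of $Y$ is immediate: for every $i$-string $S$ of $\B(\mu)$ one has $S \cap Y = S$, one of the three allowed intersection patterns in Definition~\ref{def:extremal-crystal}. The hinge criterion then requires that whenever $x \otimes y \in X \otimes Y$ satisfies $e_i(x \otimes y) = e_i(x) \otimes y \neq 0$ and $f_i(x \otimes y) = x \otimes f_i(y) \neq 0$, we have $f_i(y) \in Y$; but since $Y$ equals the entire crystal $\B(\mu)$, the nonvanishing of $f_i(y)$ built into the hypothesis forces $f_i(y) \in \B(\mu) = Y$. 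The criterion is therefore vacuous, and hence $\B_w(\lambda) \otimes \B(\mu)$ is extremal.

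There is no substantive obstacle on this path: the content is the single observation that the failure mode flagged by Theorem~\ref{thm:extreme-hinge} cannot be triggered when one of the tensor factors is maximal. If a proof logically independent of the main theorems is desired, the natural alternative is induction on $\ell(w)$. The base case $w = e$ is the Lakshmibai--Littelmann--Magyar / Joseph result $\B_e(\lambda) \otimes \B_v(\nu)$ specialized to $v = w_0$. For the inductive step one picks $i$ with $s_i w \prec w$, uses (D2) to write $\B_w(\lambda)$ as the $f_i$-closure of the $i$-highest weight elements of $\B_{s_i w}(\lambda)$, and then verifies that the $i$-highest weight elements of the tensor $\B_{s_i w}(\lambda) \otimes \B(\mu)$ coincide with those of $\B_w(\lambda) \otimes \B(\mu)$; propagating the inductive decomposition $\B_{s_i w}(\lambda) \otimes \B(\mu) = \bigsqcup_j \B_{v_j}(\nu_j)$ through the $f_i$-closure in the tensor sends each summand $\B_{v_j}(\nu_j)$ to $\B_{s_i v_j}(\nu_j)$ (or to itself, when $s_i v_j \prec v_j$), giving the required Demazure decomposition. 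The only slightly delicate point in this alternative is checking that the $i$-string in the tensor starting at a mutual top stays inside $\B_w(\lambda) \otimes \B(\mu)$, which follows from (D3) applied to the left factor.
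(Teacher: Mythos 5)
Your main argument is correct and, within the logical structure of the paper, non-circular: the proofs of Theorem~\ref{thm:badguy} and of the unnamed theorem in Section~\ref{sec:main} nowhere use Proposition~\ref{prop:excellent-tensor}, and in finite type $\B(\mu)=\B_{w_0}(\mu)$ is itself a Demazure crystal, so Theorem~\ref{thm:extreme-demazure} does apply; your observation that a broken $i$-hinge is impossible when the right factor is all of $\B(\mu)$ is exactly right, and indeed the paper makes the same remark at the end of Section~\ref{sec:tensor-extreme}. However, this is a genuinely different route from the paper's, and it quietly reintroduces the dependence the paper is trying to avoid: Theorem~\ref{thm:extreme-demazure} is proved by combining the extremality criterion with Kouno's Theorem~\ref{thm:kouno}, so your ``near-immediate corollary'' amounts to observing that the proposition is the case $u=w_0$ of Kouno's result --- which the paper acknowledges but explicitly sets out to reprove directly. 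The paper's actual proof is essentially your sketched alternative: it filters $\B_w(\lambda)$ by Demazure crystals along a reduced word, starts from Joseph's result that $\{b_\lambda\}\otimes\B(\mu)$ decomposes into Demazure crystals, and shows at each step that $\B_{s_iw}(\lambda)\otimes\B(\mu)=\F_i\bigl(\B_w(\lambda)\otimes\B(\mu)\bigr)$, whence the decomposition propagates since $\F_i$ carries a direct sum of Demazure crystals to a direct sum of Demazure crystals. What your route buys is brevity once the main theorems are in hand; what the paper's route buys is a self-contained argument (modulo the Joseph/Lakshmibai--Littelmann--Magyar base case) that does not rest on Kouno's technical Lakshmibai--Seshadri path analysis. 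If you keep the short proof, you should at least flag that it depends on Theorem~\ref{thm:kouno} through Theorem~\ref{thm:extreme-demazure}.
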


\begin{proof}
  By definition, $\B_w(\lambda)$ admits a filtration by Demazure crystals,
  \[
  \{b_\lambda\} \subset X_{i_1}\subset \dots \subset X_{i_{\ell-1}} \subset X_{i_\ell}=\B_w(\lambda)
  \] 
  where $s_{i_\ell} \dots s_{i_1}$ is a reduced expression for $w$ and $\F_{i_r}(X_{i_{r-1}})=X_{i_r}$.  Consequently,  we have a filtration on $X \otimes \B(\mu)$ with $k$ minimal such that
  \[
  \{b_\lambda\} \otimes \B(\mu) \subset X_{i_1}\otimes \B(\mu)\subset \dots \subset X_{i_{k-1}}\otimes \B(\mu) \subset X_{i_k}\otimes \B(\mu)=X\otimes \B(\mu). 
  \] 
  By \cite[Thm.~2.11]{Jos03}, $\{b_\lambda\} \otimes \B(\mu)$ is a direct sum of Demazure crystals.  So let $X_{i_{k-1}}= \B_w(\lambda)$ and $i=i_k$ so that $X = \B_{s_iw}(\lambda)$.  For induction, assume $\B_w(\lambda)\otimes \B(\mu)$ is a direct sum of Demazure crystals, and let $x\otimes b \in X \otimes \B(\mu)$ such that $x \not\in \B_w(\lambda)$ but $e_i^m(x) \in \B_w(\lambda)$ for some $m>0$.  If $e_i(x \otimes b) =0$ then necessarily $\varepsilon_i(x)=0$, a contradiction. Thus we have two possibilities.  If $e_i(x \otimes b) = e_i(x) \otimes b$ then $\varepsilon_i(b)\leq \varphi_i(x)$.  However $\varphi_i(x)<\varphi_i(e_i^m(x))$ and thus $e_i^m(x \otimes b) = e_i^m(x) \otimes b \in \B_w(\lambda) \otimes \B(\mu)$.  If instead $e_i(x \otimes b) = x \otimes e_i(b)$ then $\varepsilon_i(b)> \varphi_i(x)$.  So let $t = \varepsilon_i(b) - \varphi_i(x)$ so that $\varepsilon_i(e_i^t(b))= \varphi_i(x)$ and $e_i^t(x \otimes b) = x \otimes e_i^t(b)$.  From this and the fact that $\varepsilon_i(x)>0$ we see that $e_i(x \otimes e_i^t(b)) = e_i(x) \otimes e_i^t(b)$.  Iterating as in the previous case we obtain $e_i^{m+t}(x \otimes b) = e_i^m(x \otimes e_i^t(b)) = e_i^m(x) \otimes e_i^t(b)$ and thus $X \otimes \B(\mu) \subset \F_i(\B_w(\lambda) \otimes \B(\mu))$.  The reverse inclusion follows from the fact that $f_i^k(x \otimes b)$ will always be contained in $X \otimes \B(\lambda)$ for any $x \otimes b \in \B_w(\lambda) \otimes \B(\mu)$ and any $k\geq 0$.
\end{proof}

In order to apply Kouno's theorem, it is necessary to have the exact values of $\lambda, \mu \in P^+$ and $w,u \in W$ that index the Demazure crystals. Sometimes, this information is not readily available. It is possible to know the crystals in question are Demazure without knowing exactly which Demazure crystals they are. In the following section we remove this constraint by using extremal crystals to give an alternative local characterization for when a product of Demazure crystals is a direct sum of Demazure crystals. 

%
\section{Tensor products of extremal crystals}
%
\label{sec:tensor-extreme}

As with Demazure crystals, tensor products of extremal crystals are not always extremal. For example, take $\g = A_1$, then $\B_{s_1}(\omega_1) \otimes \{b_{\omega_1}\}$ is $2$-dimensional with unique highest weight $2 \omega_1$, and so is neither extremal nor Demazure. Below, we characterize when the tensor product of extremal crystals remains extremal.

\begin{definition}
  An element $x \otimes y \in \B(\lambda)\otimes\B(\mu)$ is called an \emph{$i$-hinge} if
  \begin{enumerate}
  \item $\varepsilon_i(x) > 0$, $\varphi_i(x) = 0$,
  \item $\varepsilon_i(y) = 0$, $\varphi_i(y) > 0$.
  \end{enumerate}
\end{definition}

In terms of the crystal structure, an $i$-hinge is an element where $e_i$ acts on the left factor and $f_i$ acts on that right factor. 

\begin{proposition}\label{prop:leftright}
  For $x \otimes y \in \B(\lambda)\otimes\B(\mu)$ an $i$-hinge, we have
  \begin{itemize}
  \item $e_i(x\otimes y) = e_i(x) \otimes y \in \B(\lambda)\otimes\B(\mu)$, and
  \item $f_i(x\otimes y) = x \otimes f_i(y) \in \B(\lambda)\otimes\B(\mu)$.
  \end{itemize}
\end{proposition}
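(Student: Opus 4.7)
The plan is to verify the two claims by direct substitution into the tensor product rules stated in Section~\ref{sec:tensor}, reading off the outcome from the defining inequalities on $\varepsilon_i$ and $\varphi_i$.

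First I would handle the $e_i$ case. The tensor product rule gives $e_i(x\otimes y) = e_i(x)\otimes y$ precisely when $\varepsilon_i(y) \le \varphi_i(x)$, and otherwise $e_i(x\otimes y) = x \otimes e_i(y)$. The $i$-hinge hypothesis supplies $\varepsilon_i(y) = 0 = \varphi_i(x)$, so the inequality $\varepsilon_i(y) \le \varphi_i(x)$ holds (with equality), and therefore $e_i(x\otimes y) = e_i(x)\otimes y$. The additional hypothesis $\varepsilon_i(x) > 0$ then forces $e_i(x) \in \B(\lambda)$ by axiom (C4), so $e_i(x)\otimes y$ is a genuine element of $\B(\lambda)\otimes\B(\mu)$.

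Next I would handle the $f_i$ case symmetrically. The rule gives $f_i(x\otimes y) = f_i(x)\otimes y$ when $\varepsilon_i(y) < \varphi_i(x)$, and $f_i(x\otimes y) = x \otimes f_i(y)$ otherwise. Since the hinge condition supplies $\varepsilon_i(y) = 0$ and $\varphi_i(x) = 0$, the strict inequality $\varepsilon_i(y) < \varphi_i(x)$ fails, so we are in the second branch and $f_i(x\otimes y) = x\otimes f_i(y)$. The hypothesis $\varphi_i(y) > 0$ gives $f_i(y) \in \B(\mu)$ by (C4), so this element lies in $\B(\lambda)\otimes\B(\mu)$.

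There is no real obstacle here; the entire content of the proposition is that the two defining inequalities for the tensor product operators degenerate in the same way when both sides are zero, sending $e_i$ to the left tensorand and $f_i$ to the right. I would simply present the two computations in sequence, citing the tensor product definition for $e_i, f_i$ and axiom (C4) for the non-vanishing of $e_i(x)$ and $f_i(y)$.
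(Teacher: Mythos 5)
Your proof is correct and follows the same approach as the paper's: substitute $\varphi_i(x)=0=\varepsilon_i(y)$ into the tensor product rules to see that $e_i$ acts on the left factor and $f_i$ on the right, then use $\varepsilon_i(x)>0$ and $\varphi_i(y)>0$ to conclude the results are nonzero. Your version just spells out the case analysis of the inequalities slightly more explicitly.
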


\begin{proof}
  Since $\varphi_i(x) = 0 = \varepsilon_i(y)$, the tensor product rules give $e_i(x\otimes y) = e_i(x) \otimes y$ and $f_i(x\otimes y) = x \otimes f_i(y)$. Since $\varepsilon_i(x)>0$, we have $e_i(x) \in \B(\lambda)$, and since $\varphi_i(y)>0$, we have $f_i(y)\in\B(\mu)$. 
\end{proof}

Thus, we say an $i$-hinge $x \otimes y \in X\otimes Y$ is \emph{broken} if $f_i(y) \not\in Y$. Determining whether $X \otimes Y$ contains a broken $i$-hinge is a local, easily checked property which precisely characterizes when tensor products of extremal crystals are extremal.

\begin{theorem}\label{thm:badguy}
  Let $X\subset \B(\lambda)$ and $Y\subset\B(\mu)$ be extremal subsets. Then $X \otimes Y$ is an extremal subset of $\B(\lambda) \otimes \B(\mu)$ if and only if $X\otimes Y$ does not contain a broken $i$-hinge for any $i$.
\end{theorem}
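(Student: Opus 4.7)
The plan is to prove both implications by analyzing how an arbitrary $i$-string $S$ of $\B(\lambda) \otimes \B(\mu)$ intersects $X \otimes Y$, leveraging Proposition~\ref{prop:leftright} on the action of $e_i$ and $f_i$ at an $i$-hinge.

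For the forward direction, I will argue by contrapositive. Suppose $x \otimes y \in X \otimes Y$ is a broken $i$-hinge. By Proposition~\ref{prop:leftright}, $e_i(x \otimes y) = e_i(x) \otimes y \neq 0$, and $e_i(x) \in X$ because the extremality of $X$ forces closure under $e_i$. Thus $x \otimes y$ lies in its $i$-string $S$ without being the top, and $S \cap (X \otimes Y)$ contains at least two elements of $S$. If $X \otimes Y$ were extremal, the trichotomy would force $S \subseteq X \otimes Y$; applying Proposition~\ref{prop:leftright} again, $f_i(x \otimes y) = x \otimes f_i(y) \in X \otimes Y$, so $f_i(y) \in Y$, contradicting the brokenness.

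For the reverse direction, I fix an $i$-string $S$ in $\B(\lambda) \otimes \B(\mu)$ with top $b_0 = x_0 \otimes y_0$ and show $S \cap (X \otimes Y) \in \{\varnothing, \{b_0\}, S\}$. The tensor product rule splits $S$ into $h := \varphi_i(x_0) - \varepsilon_i(y_0)$ left steps (during which the right factor is fixed at $y_0$ and the left factor descends through $x_0, f_i(x_0), \ldots, f_i^h(x_0)$) followed by $\varphi_i(y_0)$ right steps (during which the left factor is fixed at $x_h := f_i^h(x_0)$ and the right factor descends through $y_0, f_i(y_0), \ldots, f_i^{\varphi_i(y_0)}(y_0)$). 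Let $T_x, T_y$ denote the $i$-strings in $\B(\lambda), \B(\mu)$ of $x_0, y_0$. Extremality restricts $X \cap T_x$ and $Y \cap T_y$ to three options each (empty, a singleton top, or the full string), and I will case-check the nine combinations to determine $S \cap (X \otimes Y)$.

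The main obstacle is pinpointing that a broken hinge is the unique combination that can destroy extremality. Most cases immediately yield the trichotomy; in particular, whenever $\varepsilon_i(y_0) > 0$ the top of $T_y$ is $e_i^{\varepsilon_i(y_0)}(y_0) \neq y_0$, so $y_0 \in Y$ already forces $Y \cap T_y = T_y$, which absorbs the ``generalized hinges'' with $\varepsilon_i(y) = \varphi_i(x) > 0$. The only failing combination is $X \cap T_x = T_x$, $Y \cap T_y = \{y_0\}$ (forcing $\varepsilon_i(y_0) = 0$), with $\varphi_i(x_0), \varphi_i(y_0) > 0$: there $S \cap (X \otimes Y)$ equals exactly the set of left steps, which is neither $\{b_0\}$ nor $S$. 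But then the junction $x_h \otimes y_0$ satisfies $\varphi_i(x_h) = 0$, $\varepsilon_i(x_h) = \varphi_i(x_0) > 0$, $\varepsilon_i(y_0) = 0$, and $\varphi_i(y_0) > 0$, so it is an $i$-hinge in $X \otimes Y$; and since $Y \cap T_y = \{y_0\}$ gives $f_i(y_0) \notin Y$, this hinge is broken. The hypothesis rules this out, completing the argument.
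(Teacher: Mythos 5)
Your proof is correct. The forward direction is essentially identical to the paper's: a broken $i$-hinge $x\otimes y$ has $e_i(x)\otimes y \in X\otimes Y$ above it and $x\otimes f_i(y)\notin X\otimes Y$ below it, killing the trichotomy. Your reverse direction, however, takes a genuinely different route. The paper argues by contradiction from a minimal failure: it locates an $i$-string whose intersection with $X\otimes Y$ is neither $\varnothing$, a singleton top, nor the whole string, walks down to the last element $f_i^{m-1}(x_0\otimes y_0)$ still inside $X\otimes Y$, and then deduces one by one that $\varepsilon_i(x)>0$, $\varphi_i(x)=\varepsilon_i(y)=0$, $\varphi_i(y)>0$, and $f_i(y)\notin Y$. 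You instead give a direct structural classification: every $i$-string of $\B(\lambda)\otimes\B(\mu)$ with top $x_0\otimes y_0$ (so $\varepsilon_i(x_0)=0$ and $\varepsilon_i(y_0)\le\varphi_i(x_0)$) consists of $h=\varphi_i(x_0)-\varepsilon_i(y_0)$ left steps followed by $\varphi_i(y_0)$ right steps, and you check the nine possibilities for $(X\cap T_x,\,Y\cap T_y)$ allowed by extremality of the factors. Your observation that $y_0\in Y$ with $\varepsilon_i(y_0)>0$ forces $Y\cap T_y=T_y$ correctly disposes of the interior-junction cases, and the single surviving failure ($X\cap T_x=T_x$, $Y\cap T_y=\{y_0\}$, $h>0$, $\varphi_i(y_0)>0$) is exactly a broken hinge at the junction $f_i^h(x_0)\otimes y_0$. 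What your approach buys is more information: it exhibits precisely what the bad intersection looks like (the entire left leg of the string), whereas the paper's argument only extracts the hinge; the cost is a longer case enumeration where the paper's deductions are shorter.
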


\begin{proof}
  Suppose $x \otimes y \in X \otimes Y$ is an $i$-hinge and $f_i(y) \not\in Y$. By Proposition~\ref{prop:leftright}, so $e_i(x\otimes y) = e_i(x) \otimes y \in X \otimes Y$, since $X$ is extremal. Thus $x \otimes y$ is not the top of its $i$-string. Similarly, $f_i(x\otimes y) = x \otimes f_i(y) \in X \otimes \B(\mu)$. However, since $f_i(y) \not\in Y$, we have $x \otimes f_i(y) \not\in X \otimes Y$. Thus the $i$-string through $x \otimes y$ violates the extremal criteria for $X \otimes Y$.

  Now suppose $X \otimes Y$ is not extremal. Since both $X$ and $Y$ are extremal, we must have $\E (X\otimes Y) \subset X \otimes Y \sqcup \{0\}$ since $\E X \subset X \sqcup \{0\}$ and $\E Y \subset Y \sqcup \{0\}$. Thus there exists $x_0 \otimes y_0 \in X \otimes Y$ and $i\in I$ such that
  \begin{itemize}
  \item $\varepsilon_i(x_0 \otimes y_0)=0$,
  \item $f_i(x_0 \otimes y_0) \in X \otimes Y$, and
  \item for some $m>1$, we have $f_i^m(x_0 \otimes y_0) \not\in X \otimes Y \sqcup \{0\}$.
  \end{itemize}
  Taking $m>1$ above to be minimal, define $x \otimes y = f_i^{m-1}(x_0 \otimes y_0) \in X \otimes Y$. We will show $x \otimes y$ is an $i$-hinge with $f_i(y) \not\in Y$.

  We claim $e_i(x \otimes y) = e_i(x) \otimes y$. Indeed, if $e_i(x \otimes y) = x \otimes e_i(y)$, then $\varepsilon_i(y) > \varphi_i(x)$, and so $f_i(x \otimes y) = x \otimes f_i(y)$ as well. But since $\varepsilon_i(y) > \varphi_i(x) \ge 0$ and $Y$ is extremal, $f_i(y) \in Y \sqcup \{0\}$, contradicting that $f_i(x \otimes y) \not\in X\otimes Y \sqcup \{0\}$. Thus $e_i(x \otimes y) = e_i(x) \otimes y$ and, in particular, $\varepsilon_i(y) \le \varphi_i(x)$. Moreover, since $m>1$, we have $e_i(x) \otimes y = e_i(x \otimes y) \neq 0$, and so $\varepsilon_i(x) > 0$.
  
  By the crystal axioms, it follows as well that $e_i^k(x \otimes y) = e_i^k(x) \otimes y$ for $k < m$. In particular, $y = y_0$, and so $\varepsilon_i(y) = 0$.

  Furthermore, since $X$ is extremal and $e_i(x),x\in X$, we must have $f_i(x) \in X \sqcup \{0\}$ and so $f_i(x) \otimes y \in X\otimes Y \sqcup \{0\}$. Thus we must have $f_i(x \otimes y) = x \otimes f_i(y)$. In particular, $\varepsilon_i(y) \ge \varphi_i(x)$. By the previous inequality, we have $\varphi_i(x) = \varepsilon_i(y) = 0$.
  
  Finally, since $x \otimes f_i(y) = f_i(x \otimes y) \not\in X \otimes Y \sqcup \{0\}$, we have $f_i(y) \not\in Y \sqcup \{0\}$ and so $\varphi_i(y) > 0$. Therefore $x \otimes y$ is an $i$-hinge with $f_i(y) \not\in Y$ as claimed.
\end{proof}

Thus Theorem~\ref{thm:extreme-hinge} follows from Theorem~\ref{thm:badguy} and Proposition~\ref{prop:leftright}.
Notice, if $X\subset \B(\lambda)$ has only the highest weight element or if $Y \subset \B(\mu)$ contains all possible elements, then $X \otimes Y$ contains no hinges. That is, both $\{b_\lambda\}\otimes\B_u(\mu)$ and $\B_w(\lambda) \otimes \B(\mu)$ are extremal subsets of $\B(\lambda) \otimes \B(\mu)$. Of course, this follows by Theorem~\ref{thm:kouno} since both are direct sums of Demazure crystals.

%
\section{Extremal tensor products}
%
\label{sec:main}

Our interest in extremal crystals lies not exclusively in their implicit structure, but rather in the structure imposed on the factors when a tensor product of two subsets of crystals is extremal. We begin with the following.

\begin{proposition}\label{prop:extreme}
  If $X \otimes Y \subset \B(\lambda) \otimes \B(\mu)$ is an extremal subset, then $\E (X) \subset X \sqcup \{0\}$. Furthermore, if $\E (Y) \subset Y \sqcup \{0\}$, then $X \subset \B(\lambda)$ is an extremal subset.
\end{proposition}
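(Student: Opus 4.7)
My plan is to exploit the fact that if an element of $X \otimes Y$ is not at the top of its $i$-string, then extremality of $X \otimes Y$ forces the entire $i$-string into $X \otimes Y$; reading off the appropriate tensor coordinate along such a string then yields the desired membership in $X$.

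For the first claim, fix $x \in X$ with $e_i(x) \neq 0$ and choose any $y \in Y$, which is possible since $X \otimes Y$ is nonempty. From the tensor product formulas, $\varepsilon_i(x \otimes y) \geq \varepsilon_i(x) > 0$, so $x \otimes y$ is not the top of its $i$-string $S$ in $\B(\lambda) \otimes \B(\mu)$, and extremality of $X \otimes Y$ gives $S \subseteq X \otimes Y$. Starting from $x \otimes y$ and applying $e_i$ repeatedly, the operator acts on the right tensor factor until $\varepsilon_i$ of that factor drops to $\varphi_i(x)$, then crosses to act on the left; after exactly $\max(0,\,\varepsilon_i(y) - \varphi_i(x)) + 1$ steps, one reaches an element of $S$ of the form $e_i(x) \otimes y'$, and $e_i(x) \in X$ follows.

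For the second claim, assume in addition $\E(Y) \subseteq Y \sqcup \{0\}$, and let $T$ be an $i$-string of $\B(\lambda)$ meeting $X$. The first claim together with $\E$-closure forces the top $t$ of $T$ into $X$. If $T \cap X = \{t\}$ we are done, so assume $f_i^j(t) \in X$ for some $j \geq 1$, whence $\varphi_i(t) \geq 1$. Since $\E(Y) \subseteq Y \sqcup \{0\}$ and $Y$ is nonempty, raising any element of $Y$ to the top of its $i$-string gives some $y \in Y$ with $\varepsilon_i(y) = 0$. Then $t \otimes y$ is the top of its $i$-string $S'$, and since $\varepsilon_i(y) = 0 < \varphi_i(t)$ the lowering rule iterates to $f_i^k(t \otimes y) = f_i^k(t) \otimes y$ for every $k \leq \varphi_i(t)$. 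In particular $f_i^j(t) \otimes y \in X \otimes Y$ is strictly below the top of $S'$, so extremality of $X \otimes Y$ forces $S' \subseteq X \otimes Y$, yielding $f_i^k(t) \in X$ for all $0 \leq k \leq \varphi_i(t)$, i.e.\ $T \subseteq X$.

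The delicate point is the cross-over behaviour in Part 1: one must verify that some iterate $e_i^k(x \otimes y)$ really has the form $e_i(x) \otimes y'$, regardless of the chosen $y$. This is a short check from the tensor rule but is what makes the argument work for any $y \in Y$. The complementary design in Part 2 is forcing $\varepsilon_i(y) = 0$, which ensures that the lowering operators on $t \otimes y$ act first and persistently on the $\B(\lambda)$-factor, allowing extremality of $X \otimes Y$ to transfer back to extremality of $X$.
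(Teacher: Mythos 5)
Your proof is correct and both halves go through; Part 2 is essentially the paper's own argument (pick $y\in Y$ with $\varepsilon_i(y)=0$ so that $f_i$ acts persistently on the left factor). In Part 1 you take an arbitrary $y$, note $x\otimes y$ lies strictly below the top of its $i$-string so extremality pulls in the whole string, and climb until $e_i$ crosses to the left factor, whereas the paper instead chooses $y$ maximal in $Y$ along its $i$-string so that $e_i$ acts on the left in a single step — a minor variation on the same local use of the tensor rule, and your crossover count $\max(0,\varepsilon_i(y)-\varphi_i(x))+1$ checks out.
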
 

\begin{proof}
  Suppose $X \otimes Y \subset \B(\lambda) \otimes \B(\mu)$ is an extremal subset. Take $y\in Y$ such that $e_i(y) \not\in Y$. We claim $e_i(x \otimes y) = e_i(x) \otimes y$ for any $x\in X$. In particular, when $X \otimes Y$ is extremal, this implies $e_i(x)\in X$. To see the claim, notice if $e_i(y) = 0$, then $\varepsilon_i(y) = 0$ and so $\varphi_i(x) \ge \varepsilon_i(y)$ showing $e_i(x \otimes y) = e_i(x) \otimes y$. Alternatively, if $e_i(y) \neq 0$ and $e_i(x \otimes y) = x \otimes e_i(y)$, then since $X \otimes Y$ is extremal, this implies $e_i(y)\in Y$ contradicting the choice of $y$. Thus $\E (X) \subset X \sqcup \{0\}$.

  Now suppose, in addition, that $\E (Y) \subset Y \sqcup \{0\}$. Then we may take $y\in Y$ such that $e_i(y) =0$. Consider $x\in X$ for which $e_i(x), f_i(x) \neq 0$. By the prior argument, $e_i(x)\in X$, and so $e_i( x \otimes y ) = e_i(x) \otimes y \neq 0$ since $\varphi_i(y)=0$. We must show $f_i(x)\in X$. Since $X \otimes Y$ is extremal and $e_i( x \otimes y )\neq 0$, we have $f_i(x \otimes y) \in X \otimes Y \sqcup \{0\}$. Since $f_i(x) \neq 0$, we have $\varphi_i(x)> 0 = \varepsilon_i(y)$, and so $f_i(x \otimes y) = f_i(x) \otimes y$, which ensure $f_i(x) \in X$. Thus $X$ is extremal.  
\end{proof}

Notice that in Proposition~\ref{prop:extreme}, $Y$ need not be extremal, even when $\E Y \subset Y \sqcup \{0\}$. For example, for $\g = A_2$ with $\lambda = 2 \omega_1 + 2\omega_2$ and $\mu = 2 \omega_1$, take $X = \{b_{\lambda}\}$ and $Y = \{ b_{\mu}, f_1(b_{\mu}) \}$. Then $X \otimes Y \cong \B_{\mathrm{id}}(\lambda+\mu) \oplus \B_{\mathrm{id}}(\lambda+\omega_2)$ is a direct sum of Demazure crystals, and hence extremal, but $Y$ is not even extremal.

Any subset $\B_w(\lambda) \otimes \B_u(\mu) \subset \B(\lambda) \otimes \B(\mu)$ which is a direct sum of Demazure crystals is also an extremal subset, since Demazure crystals are extremal. Amazingly, the converse is also true. 

\begin{theorem}
  For $\lambda,\mu\in P^+$ and $w,u\in W$, we have $\B_w(\lambda) \otimes \B_u(\mu)$ is an extremal subset of $\B(\lambda) \otimes \B(\mu)$ if and only if $\floor{w}^\lambda \in W_{\ceiling{u}^\mu}$.
\end{theorem}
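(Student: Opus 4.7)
Set $\tilde w = \floor{w}^\lambda$ and $\tilde u = \ceiling{u}^\mu$. The plan is to prove the two directions separately, with Kouno's Theorem~\ref{thm:kouno} supplying the easy implication and the hinge criterion of Theorem~\ref{thm:badguy} driving an induction for the hard one. For $(\Leftarrow)$, if $\tilde w \in W_{\tilde u}$ then Kouno identifies $\B_w(\lambda)\otimes\B_u(\mu)$ as a direct sum of Demazure crystals, each of which is extremal by (D3); since two distinct Demazure summands of the ambient tensor product must lie in distinct connected components (otherwise both would contain the highest weight element of a common $\B(\nu)$), every $i$-string of $\B(\lambda)\otimes\B(\mu)$ meets at most one summand, and the extremality of that summand transfers to the union.

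For $(\Rightarrow)$, I plan to induct on $\ell(\tilde w)$. The base case $\tilde w = e$ is trivial. In the inductive step, fix any left descent $s_i$ of $\tilde w$ and write $\tilde w = s_i v$ reduced; a standard length argument confirms that $v$ is itself the minimum length coset representative of $v W_\lambda$, so $\floor{v}^\lambda = v$. The induction closes on two sub-claims: (i) $s_i \in W_{\tilde u}$, i.e., $s_i \tilde u \prec \tilde u$, and (ii) $\B_v(\lambda)\otimes\B_u(\mu)$ is extremal. Granting both, induction yields $v \in W_{\tilde u}$ and hence $\tilde w = s_i v \in W_{\tilde u}$. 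Sub-claim (ii) is immediate from Theorem~\ref{thm:badguy}: any broken $j$-hinge in $\B_v(\lambda)\otimes\B_u(\mu)$ still lies in the larger $\B_w(\lambda)\otimes\B_u(\mu)$ and remains broken there (the hinge condition and the condition $f_j(y)\notin \B_u(\mu)$ depend only on the factor $\B_u(\mu)$, which is unchanged), contradicting the assumed extremality of the larger product.

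Sub-claim (i) is the crux, and I argue it contrapositively by producing a broken $i$-hinge $x \otimes y$ whenever $s_i \notin W_{\tilde u}$. For the left factor, the minimality of $\tilde w$ in $\tilde w W_\lambda$ combined with $\ell(v) < \ell(\tilde w)$ forces $v \notin \tilde w W_\lambda$, so Proposition~\ref{prop:minimal} yields $\B_v(\lambda) \subsetneq \B_w(\lambda)$; combined with the description $\B_w(\lambda) = \F_i \B_v(\lambda) \setminus \{0\}$ from (D2), some $b \in \B_v(\lambda)$ must have $f_i(b) \notin \B_v(\lambda)$, and then (D3) applied to $\B_v(\lambda)$ forces $e_i(b)=0$ and $\varphi_i(b) > 0$; so $x = f_i^{\varphi_i(b)}(b) \in \B_w(\lambda)$ satisfies $\varepsilon_i(x)>0$ and $\varphi_i(x)=0$ as required. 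For the right factor, replace $u$ by $\tilde u$ (which leaves $\B_u(\mu)$ unchanged); the hypothesis $s_i \tilde u \succ \tilde u$ gives $\ell(s_i \tilde u)=\ell(\tilde u)+1$, which strictly exceeds the maximum length of any element of $\tilde u W_\mu$ by the maximality of $\tilde u$, so $s_i \tilde u \notin \tilde u W_\mu$ and Proposition~\ref{prop:minimal} gives $\B_{\tilde u}(\mu) \subsetneq \B_{s_i \tilde u}(\mu) = \F_i \B_{\tilde u}(\mu)\setminus\{0\}$; hence some $y \in \B_u(\mu)$ has $f_i(y) \notin \B_u(\mu)$, and (D3) forces $e_i(y)=0$ and $\varphi_i(y)>0$. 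Then $x \otimes y$ is a broken $i$-hinge, and Theorem~\ref{thm:badguy} provides the contradiction. The subtlest point is the asymmetric use of coset conventions: the minimality of $\tilde w$ is what creates the strict containment $\B_v \subsetneq \B_w$, while the maximality of $\tilde u$ is what rules out the degenerate possibility $\B_{s_i u}(\mu) = \B_u(\mu)$ that would prevent constructing $y$—exactly the asymmetry built into the statement of the theorem.
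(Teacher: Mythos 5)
Your proof is correct, but it departs from the paper's in both directions, in different ways. For the implication $\floor{w}^\lambda \in W_{\ceiling{u}^\mu} \Rightarrow \text{extremal}$, the paper does \emph{not} invoke Kouno's theorem: it argues contrapositively that a failure of extremality yields (via Theorem~\ref{thm:badguy}) a broken $i$-hinge $x\otimes y$, from which $s_i \notin W_{\ceiling{u}^\mu}$ is read off from $0\neq f_i(y)\notin\B_u(\mu)$ and $s_i\preceq\floor{w}^\lambda$ from $\varepsilon_i(x)>0$. Your route through Theorem~\ref{thm:kouno} is logically sound --- there is no circularity, since Kouno's result is external and the paper only uses it afterward to deduce Theorem~\ref{thm:extreme-demazure} --- and your observation that distinct Demazure summands must lie in distinct connected components (each containing its component's highest weight element) is exactly what is needed to pass from ``direct sum of Demazure crystals'' to ``extremal.'' But it sacrifices the self-containedness of the paper's purely hinge-theoretic argument, which is part of the point of replacing Kouno's global criterion with a local one. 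For the converse implication, your induction on $\ell(\floor{w}^\lambda)$, peeling off a left descent $s_i$ and manufacturing a broken $i$-hinge from the strict inclusions $\B_v(\lambda)\subsetneq\F_i\B_v(\lambda)\setminus\{0\}=\B_w(\lambda)$ and $\B_{\ceiling{u}^\mu}(\mu)\subsetneq\F_i\B_{\ceiling{u}^\mu}(\mu)\setminus\{0\}$, is the same core idea as the paper's first paragraph, but arguably better organized: the paper works with an arbitrary $s_i\preceq\floor{w}^\lambda$ outside $W_{\ceiling{u}^\mu}$ and asserts the existence of $x$ with $\varepsilon_i(x)>0$, $\varphi_i(x)=0$ from a loosely specified ``all $m$'s maximal'' expression, whereas your reduction to the case where $s_i$ is a left descent lets (D2) and (D3) produce $x$ and $y$ cleanly. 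The two auxiliary facts you rely on --- that $s_i\floor{w}^\lambda$ remains a minimal coset representative, and that a broken hinge in $\B_v(\lambda)\otimes\B_u(\mu)$ persists in $\B_w(\lambda)\otimes\B_u(\mu)$ --- are both correct.
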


\begin{proof}
Suppose that $\floor{w}^\lambda \not\in W_{\ceiling{u}^\mu}$. Then there exists $s_i \prec \floor{w}^\lambda$ such that $s_i \ceiling{u}^\mu \succ \ceiling{u}^\mu$. Since $\ceiling{u}^\mu$ is maximal length in $uW_\mu$ then $s_i \ceiling{u}^\mu (\mu) \neq \ceiling{u}^\mu(\mu)$. Thus $B_u(\mu) = B_{\ceiling{u}^\mu}(\mu) \subsetneqq  B_{s_i\ceiling{u}^\mu}(\mu) = B_{s_iu}(\mu)$ and we can find $y \in B_u(\mu)$ for which $\varphi_i(y)>0$ but $f_i(y) \not\in B_u(\mu)$.  On the other hand, since $s_i \prec \floor{w}^\lambda$ and $\floor{w}^\lambda$ is the shortest coset representative, by Proposition~\ref{prop:kas}, there exists an element $x \in B_w(\lambda)$ given by $x = f_i^m f_{j_k}^{m_k} \dots f_{j_1}^{m_1}(b_\lambda)$ with all $m's$ maximal for which $\varepsilon_i(x) >0$ and $\varphi_i(x) =0$. So if we consider $x \otimes y \in B(\lambda) \otimes B(\mu)$ we can see that $e_i^m(x \otimes y) = e_i^m(x) \otimes y \in B_w(\lambda) \otimes B_u(\mu)$ but $f_i(x \otimes y) = x \otimes f_i(y) \not\in B_w(\lambda) \otimes B_u(\mu)$. Thus $B_w(\lambda) \otimes B_u(\mu)$ contains a broken $i$-hinge and by Theorem \ref{thm:badguy} cannot be extremal. 

Now suppose $B_w(\lambda) \otimes B_u(\mu)$ is not an extremal subset. Then by Theorem \ref{thm:badguy} there exists a broken $i$-hinge $x\otimes y \in B_w(\lambda) \otimes B_u(\mu)$. In particular, $0 \neq f_i(y) \not\in B_u(\mu)$ so $B_{\ceiling{u}^\mu}(\mu) = B_u(\mu) \subsetneqq B_{s_iu}(\mu)= B_{s_i\ceiling{u}^\mu}(\mu) $ and hence $s_i\ceiling{u}^\mu \not\in uW_\mu$. Since $\ceiling{u}^\mu$ has maximal length then $s_i\ceiling{u}^\mu \succ \ceiling{u}^\mu$, thus $s_i \not\in W_{\ceiling{u}^\mu}$. Moreover, since $x \in B_w(\lambda)$ with $\varepsilon_i(x) >0$ then $s_i \prec \floor{w}^\lambda$ which implies $\floor{w}^\lambda \not\in W_{\ceiling{u}^\mu}$.
\end{proof}

Combining this with Theorem \ref{thm:kouno} yields Theorem~\ref{thm:extreme-demazure}, giving a local characterization for which tensor products of Demazure crystals remain Demazure.

Notice Theorem~\ref{thm:extreme-demazure} is false for tensor products of extremal subsets. For example, take $\g = A_2$ with $\lambda = \omega_3$ and $\mu = \omega_1 + \omega_2$. Then both $X = \{b_{\lambda}\}$ and $Y = \{ b_{\mu}, f_1(b_{\mu}), f_2(b_{\mu}) \}$ are extremal, but $X \otimes Y \cong Y$ is extremal but not Demazure.

%
\section{Application to tensor squares}
%
\label{sec:squares}

Even when $\B_{w}(\lambda) \otimes \B_{u}(\mu)$ is not a direct sum of Demazure crystals, some connected components of it may be. For example, take $\g = A_2$ with $\lambda = \omega_1 + \omega_2$ and $w = s_1 s_2$. Then $\B_{w}(\lambda)\otimes \B_{w}(\lambda)$ has four connected components, two of which are Demazure crystals and two of which are not even extremal. 

Generalizing this example, we show the connected component of the tensor square $\B_{w}(\lambda) \otimes \B_{w}(\lambda)$ containing $b_{\lambda}\otimes b_{\lambda}$ is always a Demazure crystal, even when $\B_{w}(\lambda) \otimes \B_{w}(\lambda)$ is not a direct sum of Demazure crystals. 

\begin{lemma}\label{lem:diagonal}
  For $\lambda,\mu \in P^+$ such that $\langle\alpha_i^\vee,\mu\rangle = 0$ whenever $\langle\alpha_i^\vee,\lambda\rangle=0$, if $x \otimes y \in \F(\{b_{\lambda} \otimes b_{\mu}\}) \subseteq \B(\lambda) \otimes \B(\mu)$ and $x \in \B_w(\lambda)$ for some $w\in W$, then $y \in \B_w(\mu)$.
\end{lemma}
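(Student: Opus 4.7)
The plan is to identify $\F\{b_\lambda \otimes b_\mu\}$ with $\B(\lambda+\mu)$ via the canonical Cartan embedding $\Psi\colon \B(\lambda+\mu)\hookrightarrow \B(\lambda)\otimes\B(\mu)$ sending $b_{\lambda+\mu}$ to $b_\lambda\otimes b_\mu$, and to write $\Psi(b)=x(b)\otimes y(b)$. The hypothesis on $\lambda,\mu$ is equivalent to $W_\lambda\subseteq W_\mu$, whence $W_{\lambda+\mu}=W_\lambda\cap W_\mu=W_\lambda$, so the Demazure crystals $\B_w(\lambda+\mu)$ and $\B_w(\lambda)$ are indexed by the same quotient $W/W_\lambda$. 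Since $\Psi$ is a crystal morphism, $\Psi(\B_w(\lambda+\mu))=\F_w\{b_\lambda\otimes b_\mu\}\subseteq \B_w(\lambda)\otimes\B_w(\mu)$, so it suffices to prove the converse: if $x(b)\in \B_w(\lambda)$, then $b\in \B_w(\lambda+\mu)$, from which $y(b)\in \B_w(\mu)$ follows automatically.

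I would induct on the height of $b\in \B(\lambda+\mu)$. The base case $b=b_{\lambda+\mu}$ is immediate. For the step, pick $i$ with $e_i(b)\neq 0$, set $b'=e_i(b)$, and split by whether $e_i$ acts on the left or right factor of $\Psi(b)$. In either case, the assumption $x(b)\in \B_w(\lambda)$ together with property (D1) forces $x(b')\in \B_w(\lambda)$, so by induction $b'\in \B_w(\lambda+\mu)$. If $\varepsilon_i(b')>0$ then (D3) applied in $\B(\lambda+\mu)$ forces the full $i$-string of $b'$ to lie in $\B_w(\lambda+\mu)$, so $b=f_i(b')\in \B_w(\lambda+\mu)$. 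The remaining task is the case $\varepsilon_i(b')=0$, where $b'$ is the top of its $i$-string and we must rule out that $\B_w(\lambda+\mu)$ meets this string only in $\{b'\}$.

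In the left case with $\varepsilon_i(b')=0$, the tensor formulas force $\varepsilon_i(x(b'))=0$. Proposition~\ref{prop:subword} applied to $x(b)=f_i(x(b'))=f_i\cdot f_{k_1}^{a_1}\cdots f_{k_r}^{a_r}(b_\lambda)$, with $s_{k_1}\cdots s_{k_r}$ a reduced expression for $w_\lambda(x(b'))$, produces a reduced expression $s_is_{k_1}\cdots s_{k_r}$ for $w_\lambda(x(b))$ in which $s_i$ is a left descent (when the appended expression is already minimal; otherwise $x(b)\in \B_{w_\lambda(x(b'))}(\lambda)$ and a parallel argument applies). Property (D2) in $\B(\lambda+\mu)$ then yields $b=f_i(b')\in \F_i\B_{w_\lambda(x(b'))}(\lambda+\mu)\subseteq \B_{w_\lambda(x(b))}(\lambda+\mu)\subseteq \B_w(\lambda+\mu)$.

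The right case with $\varepsilon_i(b')=0$ is the main obstacle: here the tensor formulas force $\varepsilon_i(x(b'))=\varphi_i(x(b'))=0$, so the $i$-string of $x(b)=x(b')$ in $\B(\lambda)$ is a singleton, and there is no $s_i$ to extract directly from a reduced expression for $w_\lambda(x(b))$. The hypothesis $W_\lambda\subseteq W_\mu$ rules out $b'=b_{\lambda+\mu}$ (if it were, $\varphi_i(b_\lambda)=0$ would force $s_i\in W_\lambda\subseteq W_\mu$, hence $\varphi_i(b_\mu)=0$, contradicting $f_i(b')\neq 0$), so some $e_j(b')\neq 0$ for $j\neq i$. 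Recursing via $b''=e_j(b')$ and combining the inductive hypothesis with Bruhat-order subword comparisons for $s_i$ produces a reduced expression for $w_\lambda(x(b))$ having $s_i$ as a left descent, after which the (D2) argument closes the case. Getting this last bit precise, in particular tracking how the indices accumulated in a reduced expression for $w_\lambda(x(b))$ acquire $s_i$ through the recursion, is the main technical difficulty of the proof, and is exactly where the hypothesis $W_\lambda\subseteq W_\mu$ is essential.
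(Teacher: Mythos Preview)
Your approach via induction on height and tracking minimal Demazure indices is genuinely different from the paper's, but it has a real gap in exactly the ``right case with $\varepsilon_i(b')=0$'' that you flag as the main difficulty. First, your claim that the tensor formulas force $\varphi_i(x(b'))=0$ there is incorrect: writing $b=x\otimes y$ with $e_i$ acting on the right and $b'=x\otimes e_i(y)$, the condition $\varepsilon_i(b')=0$ yields only $\varepsilon_i(x)=0$ and $\varepsilon_i(e_i(y))=\varphi_i(x)$, so $\varphi_i(x)$ can be any nonnegative integer and the $i$-string of $x(b)$ in $\B(\lambda)$ need not be a singleton. More seriously, the recursion you sketch---pick $j\neq i$ with $e_j(b')\neq 0$, set $b''=e_j(b')$, and combine Bruhat subword comparisons to manufacture $s_i$ as a left descent of the minimal index of $x(b)$---is not carried out, and there is no evident mechanism by which passing to $b''$ (which may alter the left factor by an $e_j$) forces an $s_i$ onto the left of a reduced word. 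As written, the hard case is simply left open.

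The paper's proof avoids all of this with a short raising argument. It first observes, using the hypothesis on $\lambda,\mu$, that the only element of $\F\{b_\lambda\otimes b_\mu\}$ with left factor $b_\lambda$ is $b_\lambda\otimes b_\mu$ itself. Then, given $x\otimes y$ with $x=f_{i_1}^{m_1}\cdots f_{i_k}^{m_k}(b_\lambda)$ for a reduced word $s_{i_1}\cdots s_{i_k}$ of $w$ (Proposition~\ref{prop:kas}), one applies $e_{i_1}^{n_1},\ldots,e_{i_k}^{n_k}$ in that order, each with $n_j\ge m_j$, to strip the left factor back to $b_\lambda$. The result is $b_\lambda\otimes y_0$ with $y_0\in\E_w\{y\}$; by the opening observation $y_0=b_\mu$, and reversing gives $y\in\F_w\{b_\mu\}=\B_w(\mu)$ directly. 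No induction on height, no descent bookkeeping, and no case split is needed.
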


\begin{proof}
  We claim if $b_{\lambda} \otimes y \in \F(\{b_{\lambda} \otimes b_{\mu}\})$, then $y = b_{\mu}$. Since $b_{\mu}$ is highest weight, $\varepsilon_i(b_{\mu})=0$. Therefore, if $\varphi_i(b_{\lambda}) > 0$, then $f_i(b_{\lambda} \otimes b_{\mu}) = f_i(b_{\lambda}) \otimes b_{\mu}$; otherwise $\varphi_i(b_{\lambda}) = 0 = \varphi_i(b_{\mu})$, and so $f_i(b_{\lambda} \otimes b_{\lambda}) = b_{\lambda} \otimes f_i(b_{\mu}) = 0$. The claim follows.

  Now consider $x \otimes y \in \F(\{b_{\lambda} \otimes b_{\mu}\})$ with $x \in \B_w(\lambda)$. By Proposition~\ref{prop:kas}, there exist $i_1,\ldots,i_k$ and $m_1,\ldots,m_k \geq 0$ such that $x \otimes y = f_{i_1}^{m_1} \cdots f_{i_k}^{m_k} (b_{\lambda}) \otimes y$, where $s_{i_1} \cdots s_{i_{k}}$ is a reduced expression for $w$. If $\varepsilon_i(b_2) = 0$, then $e_i(b_1 \otimes b_2) = e_i(b_1) \otimes b_2$. Therefore there exist integers $n_1,\ldots,n_k$ with  $n_i \ge m_i \ge 0$ such that
  \[ e_{i_k}^{n_k} \cdots e_{i_1}^{n_1} (x \otimes y) = b_{\lambda} \otimes y_0 \]
  for some $y_0 \in \E_w\{y\}$. By the opening claim, this means $y_0 = b_{\mu}$. Reversing the relationship $y_0 \in \E_w\{y\}$ gives $y \in \F_w\{y_0\} = \F_w\{b_{\mu}\} = \B_w(\mu)$.
\end{proof}

Using this and Theorem~\ref{thm:extreme-demazure}, we have the following application.

\begin{theorem}\label{thm:square}
  For $\lambda,\mu \in P^+$ such that $\langle\alpha_i^\vee,\mu\rangle = 0$ whenever $\langle\alpha_i^\vee,\lambda\rangle=0$, and for $u,v\in W$ such that $u \preceq v$, we have
  \[ \F \left(\{b_{\lambda}\otimes b_{\mu}\right\}) \cap \B_{u}(\lambda) \otimes \B_{v}(\mu) \cong \B_{u}(\lambda+\mu). \]
\end{theorem}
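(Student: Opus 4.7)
My approach is to reduce the claim to a Demazure identity inside the connected component of $b_\lambda \otimes b_\mu$, and then prove that identity by induction on $\ell(u)$.

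First I would reduce to the case $v=u$ by establishing
\[
  \F(\{b_\lambda \otimes b_\mu\}) \cap (\B_u(\lambda) \otimes \B_v(\mu)) \;=\; \F(\{b_\lambda \otimes b_\mu\}) \cap (\B_u(\lambda) \otimes \B_u(\mu)).
\]
The containment $\supseteq$ is immediate from $u \preceq v$, which gives $\B_u(\mu) \subseteq \B_v(\mu)$ by Proposition~\ref{prop:minimal}. The containment $\subseteq$ is a single application of Lemma~\ref{lem:diagonal} with $w=u$: for $x \otimes y$ in the left-hand side, $x \in \B_u(\lambda)$ forces $y \in \B_u(\mu)$. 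Using the standard crystal isomorphism $\B(\lambda+\mu) \cong \F(\{b_\lambda \otimes b_\mu\})$ sending $b_{\lambda+\mu}$ to $b_\lambda \otimes b_\mu$, the Demazure subcrystal $\B_u(\lambda+\mu) = \F_u\{b_{\lambda+\mu}\}$ corresponds to $\F_u\{b_\lambda \otimes b_\mu\}$, so the theorem reduces to the identity
\[
  \F_u\{b_\lambda \otimes b_\mu\} \;=\; \F(\{b_\lambda \otimes b_\mu\}) \cap (\B_u(\lambda) \otimes \B_u(\mu)).
\]

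I would prove this by induction on $\ell(u)$, with the case $u=e$ trivial. For the inductive step, write $u = s_i w$ with $\ell(u)=\ell(w)+1$. The forward inclusion $\subseteq$ follows from the inductive hypothesis together with the fact that any $f_i$ acts on a single tensor factor, so starting in $\B_w(\lambda) \otimes \B_w(\mu)$ it lands in $\F_i\B_w(\lambda) \otimes \B_w(\mu) \,\cup\, \B_w(\lambda) \otimes \F_i\B_w(\mu) \subseteq \B_u(\lambda) \otimes \B_u(\mu)$. For the reverse inclusion, take $x \otimes y$ in the intersection, set $c = \varepsilon_i(x \otimes y)$, and put $x' \otimes y' = e_i^c(x \otimes y)$. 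By property (D1) applied coordinate-wise, $x' \otimes y'$ remains in $\B_u(\lambda) \otimes \B_u(\mu)$, and it is clearly still in the connected component. Since $e_i(x' \otimes y') = 0$, the tensor product rule leaves only the case $\varepsilon_i(x') = 0$ (the alternative would give $\varepsilon_i(y') = 0 > \varphi_i(x') \geq 0$). Property (D2) then forces $x' \in \B_w(\lambda)$, and Lemma~\ref{lem:diagonal} promotes this to $y' \in \B_w(\mu)$. The inductive hypothesis applied to $x' \otimes y' \in \F(\{b_\lambda \otimes b_\mu\}) \cap (\B_w(\lambda) \otimes \B_w(\mu))$ gives $x' \otimes y' \in \F_w\{b_\lambda \otimes b_\mu\}$, and reapplying $f_i^c$ yields $x \otimes y \in \F_i \F_w\{b_\lambda \otimes b_\mu\} = \F_u\{b_\lambda \otimes b_\mu\}$.

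The main obstacle is the reverse inclusion in the inductive step. Property (D2) only controls the left tensor factor, and on its own it is not enough to push $x' \otimes y'$ into $\B_w(\lambda) \otimes \B_w(\mu)$; Lemma~\ref{lem:diagonal}---which is exactly where the hypothesis that $\langle\alpha_i^\vee,\mu\rangle = 0$ whenever $\langle\alpha_i^\vee,\lambda\rangle = 0$ gets used---is what synchronises the two factors and lets the induction close.
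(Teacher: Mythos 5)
Your proof is correct, and it takes a genuinely different route from the paper's. The paper argues via the extremality machinery: it shows no element of $\F(\{b_\lambda\otimes b_\mu\})$ can be a broken $i$-hinge for $\B_u(\lambda)\otimes\B_v(\mu)$ (using Proposition~\ref{prop:subword} and Lemma~\ref{lem:diagonal}), concludes extremality via Theorem~\ref{thm:badguy}, upgrades extremality to the Demazure property via Theorem~\ref{thm:extreme-demazure} --- and hence, implicitly, Kouno's Theorem~\ref{thm:kouno} --- and finally identifies the summand as $\F_u\{b_\lambda\otimes b_\mu\}\cong\B_u(\lambda+\mu)$. You instead prove the identity $\F_u\{b_\lambda\otimes b_\mu\} = \F(\{b_\lambda\otimes b_\mu\})\cap\bigl(\B_u(\lambda)\otimes\B_u(\mu)\bigr)$ directly by induction on $\ell(u)$, using only (D1), (D2), the tensor rule, and Lemma~\ref{lem:diagonal}; the preliminary reduction from $v$ to $u$ via that lemma is clean and correctly isolates where $u\preceq v$ is used. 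What your approach buys is independence from the broken-hinge characterization and from Kouno's theorem, and it sidesteps a slight looseness in the paper's argument, where Theorems~\ref{thm:badguy} and~\ref{thm:extreme-demazure} (statements about the full product $X\otimes Y$) are applied to a single connected component; what it costs is a somewhat longer argument. One small imprecision: in your forward inclusion, a power $f_i^m$ applied to $b_1\otimes b_2$ can act on \emph{both} factors (it splits as $f_i^a(b_1)\otimes f_i^b(b_2)$ with $a,b>0$ both possible), so $\F_i\bigl(\B_w(\lambda)\otimes\B_w(\mu)\bigr)$ lands in $\F_i\B_w(\lambda)\otimes\F_i\B_w(\mu)$ rather than in the union you wrote; since $\F_i\B_w = \B_{s_iw} = \B_u$ on each factor, the desired containment in $\B_u(\lambda)\otimes\B_u(\mu)$ is unaffected.
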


\begin{proof}
  Suppose $x\otimes y\in\F(\{b_\lambda\otimes b_\mu\})$ is an $i$-hinge. Let $w$ be minimal such that $x \in \B_w(\lambda)$. Then $w \le u$ by Proposition~\ref{prop:minimal}. Furthermore, since $\varepsilon_i(x)>0$, by Proposition~\ref{prop:subword}, we have $\F_i(\B_w(\lambda)) = \B_w(\lambda)$. By Lemma~\ref{lem:diagonal}, $y \in \B_w(\mu)$. Therefore $f_i(y) \in \F_i(\B_w(\mu)) = \B_w(\mu) \subseteq \B_v(\mu)$ by Proposition~\ref{prop:minimal} and transitivity of $w \preceq u \preceq v$. In particular $x \otimes y$ cannot be a broken $i$-hinge in $B_u(\lambda)\otimes B_v(\mu)$. Thus it is extremal, and so by Theorem~\ref{thm:extreme-demazure}, it is Demazure. The result follows from that fact that $\F_u \left(\{b_{\lambda}\otimes b_{\mu}\}\right)$ is isomorphic to $\B_{u}(\lambda+\mu)$ \cite{Jos03}.
\end{proof}

The associativity of the tensor product gives the following consequence. This is also studied in \cite{DEKM} in the special case where $w$ is miniscule.

\begin{corollary}\label{cor:square}
  For $\lambda \in P^+$ and $w\in W$, for any $m \ge 1$ we have
  \[\F (\{b_{\lambda}\otimes \cdots \otimes b_{\lambda}\}) \cap \B_w(\lambda)^{\otimes m} \cong \B_{w}(m\lambda) .\]
\end{corollary}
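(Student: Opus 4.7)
The plan is to proceed by induction on $m$, with Theorem~\ref{thm:square} powering the inductive step. The base case $m=1$ is immediate: $\F(\{b_\lambda\}) \cap \B_w(\lambda) = \B_w(\lambda) \cong \B_w(1\cdot\lambda)$.

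For the inductive step, I use associativity to write $\B(\lambda)^{\otimes m} = \B(\lambda)^{\otimes (m-1)} \otimes \B(\lambda)$. Writing $P_{m-1} := \F(\{b_\lambda^{\otimes(m-1)}\})$, I observe that $\F(\{b_\lambda^{\otimes m}\}) \subseteq P_{m-1} \otimes \B(\lambda)$, because any $f_i$ applied to an element $x \otimes y$ leaves $x$ in the $\F$-closure of $b_\lambda^{\otimes(m-1)}$. Since $b_\lambda^{\otimes(m-1)}$ is a highest weight element of weight $(m-1)\lambda$, the crystal $P_{m-1}$ is canonically isomorphic to $\B((m-1)\lambda)$, and this isomorphism extends by functoriality of tensor products to an isomorphism $P_{m-1} \otimes \B(\lambda) \cong \B((m-1)\lambda) \otimes \B(\lambda)$ that sends $b_\lambda^{\otimes(m-1)} \otimes b_\lambda$ to $b_{(m-1)\lambda} \otimes b_\lambda$. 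Under this isomorphism, $\F(\{b_\lambda^{\otimes m}\})$ is carried to $\F(\{b_{(m-1)\lambda} \otimes b_\lambda\})$.

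Next, I compute $\B_w(\lambda)^{\otimes m} \cap (P_{m-1} \otimes \B(\lambda)) = \bigl(\B_w(\lambda)^{\otimes(m-1)} \cap P_{m-1}\bigr) \otimes \B_w(\lambda)$, which by the inductive hypothesis corresponds, under the canonical identification $P_{m-1} \cong \B((m-1)\lambda)$, to $\B_w((m-1)\lambda) \otimes \B_w(\lambda)$. Combining, the set $\F(\{b_\lambda^{\otimes m}\}) \cap \B_w(\lambda)^{\otimes m}$ is isomorphic to $\F(\{b_{(m-1)\lambda} \otimes b_\lambda\}) \cap \B_w((m-1)\lambda) \otimes \B_w(\lambda)$. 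I then invoke Theorem~\ref{thm:square} with $\lambda$ replaced by $(m-1)\lambda$, $\mu$ replaced by $\lambda$, and $u = v = w$: the weight hypothesis $\langle\alpha_i^\vee,\lambda\rangle=0$ whenever $\langle\alpha_i^\vee,(m-1)\lambda\rangle=0$ holds since these conditions are equivalent for $m \geq 2$, and $u \preceq v$ is trivial. The conclusion yields $\B_w(m\lambda)$, completing the induction.

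The main obstacle is bookkeeping rather than content: the inductive hypothesis must be interpreted compatibly with the canonical identification $P_{m-1} \cong \B((m-1)\lambda)$, so that $C_{m-1} := \F(\{b_\lambda^{\otimes(m-1)}\}) \cap \B_w(\lambda)^{\otimes(m-1)}$ maps to $\B_w((m-1)\lambda)$ as a subset, not merely as an abstract crystal. This should be automatic because the canonical isomorphism is unique (determined by the highest weight element) and Demazure crystals are characterized as subsets of the ambient crystal $\B((m-1)\lambda)$, but it is the point that requires explicit verification before Theorem~\ref{thm:square} can be cleanly applied.
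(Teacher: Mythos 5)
Your proof is correct and is essentially the argument the paper intends: the paper offers no details beyond ``associativity of the tensor product gives the following consequence,'' and your induction on $m$ via Theorem~\ref{thm:square} (with $(m-1)\lambda$ and $\lambda$, $u=v=w$) is exactly that consequence spelled out. The bookkeeping point you flag is handled correctly by strengthening the inductive statement to the equality $\F(\{b_\lambda^{\otimes(m-1)}\})\cap\B_w(\lambda)^{\otimes(m-1)}=\F_w(\{b_\lambda^{\otimes(m-1)}\})$, which is what the proof of Theorem~\ref{thm:square} actually produces and which the canonical highest-weight isomorphism then carries onto the subset $\B_w((m-1)\lambda)$.
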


%
\section{Concluding remarks}
%
\label{sec:another}

Interestingly, in the example above for $\g = A_2$ with $\lambda = \omega_1 + \omega_2$ and $w = s_1 s_2$, even though the tensor product of Demazure crystals $\B_{w}(\lambda)\otimes \B_{w}(\lambda)$ is not a direct sum of Demazure crystals, the corresponding tensor product of Demazure modules $V_{w}(\lambda)\otimes V_{w}(\lambda)$ does admit an excellent filtration. Indeed, if one removes from $\B_{w}(\lambda)\otimes \B_{w}(\lambda)$ the two edges ending in broken $i$-hinges, namely the $f_2$ edges from $b_{\lambda} \otimes f_1(b_{\lambda})$ and $b_{\lambda} \otimes f_1^2 f_2 (b_{\lambda})$, then the resulting structure becomes extremal and, moreover, is isomorphic to a direct sum of Demazure crystals.

As Kouno notes, Proposition~\ref{prop:excellent-tensor} implies the positivity for type A Demazure characters proven combinatorially by Haglund, Luoto, Mason and van Willigenberg \cite{HLMvW11}. Recently, Assaf \cite[Thm~5.2.2]{Assaf} gave a larger class of type A Demazure modules for which the product of the characters expands nonnegatively into Demazure characters. Yet this positivity does not follow from Theorem~\ref{thm:kouno}.

In fact, in many instances, though the characters exhibit the necessary positivity and the tensor product of the modules admits an excellent filtration, the tensor product of Demazure crystals does not decompose into a direct sum of Demazure crystals. This suggests Kashiwara's tensor product for crystals is not the correct tensor rule for Demazure crystals. This leads naturally to question the existence of a new and different tensor product for crystals that preserves the extremal property of subsets. Such a rule might lead to a characterization of when tensor products of Demazure modules admit excellent filtrations, fully generalizing the conjecture of Polo proved by Mathieu and solving the key positivity problem.

%
%

\bibliographystyle{plain} 
\bibliography{demazure}

\end{document}